\documentclass[reqno, 12pt]{amsart}
\usepackage{amsmath,amssymb}
\usepackage{graphicx}
\usepackage{verbatim}
\usepackage[colorlinks,linkcolor=blue,anchorcolor=red,citecolor=red]{hyperref}
\usepackage{enumerate}
\usepackage{upgreek}

\newtheorem{lm}{Lemma}[section]

\newtheorem{thm}{Theorem}[section]
\newtheorem{rmk}{Remark}[section]
\newtheorem{prop}{{\bf Proposition}}[section]

\DeclareMathOperator{\Id}{Id}

\newcounter{saveeqn}%

\makeatletter
\evensidemargin
\oddsidemargin
\makeatother

\title[H\"older regularity of  Hartman-Grobman theorem ]{
H\"older regularity of Hartman-Grobman theorem on vector bundle
}

\author{Xiao Tang}
\address[Xiao Tang]
{School of Mathematical Sciences\\
    Chongqing Normal University\\
    Chongqing 401331, China}
\email[(Corresponding author)]{mathtx@163.com}

\date{}
\allowdisplaybreaks[4]

\begin{document}
\maketitle

\begin{abstract}
In this paper, we study Hölder regularity of the Hartman–Grobman theorem  on vector bundles, considering regularity both along the fiber direction and along the base direction. While the main argument follows standard approaches, additional effort is devoted to constructing a continuous bundle norm that is equivalent to the original norm and with respect to which the linear part becomes hyperbolic at the first iterate.
For the local version of the theorem, we further need to construct a suitable cut-off function on the vector bundle, which is achieved by employing partition of unity.


\vskip 0.2cm

{\bf Keywords:}
Linearization; H\"older regularity; Partition of unity
\vskip 0.2cm
{\bf AMS (2020) subject classification:}
37C15; 37D20
\end{abstract}


\baselineskip 15pt   
\parskip 10pt         



\section{Introduction}
\setcounter{equation}{0}
\setcounter{lm}{0}
\setcounter{thm}{0}
\setcounter{rmk}{0}
\setcounter{df}{0}
\setcounter{cor}{0}

The conjugacy problem is a fundamental topic in dynamical systems, as conjugate systems possess the same dynamical properties of interest. Therefore, one of the central goals in the study of dynamical systems is to determine whether a complicated system can be transformed, via a conjugacy, into a simpler one. Among various types of conjugacies, linearization is particularly important, as it concerns the problem of determining whether a nonlinear system can be conjugated to a linear system. This problem has attracted considerable attention from mathematicians. In the analytic setting, Poincar\'e (\cite{Po90}), Siegel (\cite{Siegel54}), and Brjuno (\cite{Brjuno7172}) established fundamental results on analytic linearization. Later, Sternberg (\cite{Sternberg55,Sternberg57}) developed the theory of smooth local linearization. These linearization results require a nonresonance condition. In the framework of random dynamical systems, Li and Lu obtained random analogues of the Poincar\'e theorem (\cite{LL05E}), the Siegel theorem (\cite{LL08}), and the Sternberg theorem (\cite{LL05C}), extending these classical linearization results to random settings.

Regarding topological conjugacy, Hartman (\cite{H60}) and Grobman (\cite{G59}) established, without imposing any nonresonance condition, that a \(C^1\) local hyperbolic diffeomorphism is topologically conjugate to its linear part in a neighborhood of a fixed point. More precisely, let \(F:\mathbb{R}^d\to\mathbb{R}^d\) be a \(C^1\) local diffeomorphism near \(0\), satisfying \(F(0)=0\), and let \(DF(0)=:A\) be hyperbolic, meaning that the eigenvalues of \(A\) do not lie on the unit circle. Then there exists a local homeomorphism \(h\) near \(0\) such that
\begin{equation}\label{loc.conju}
h\circ F = A\circ h.
\end{equation}
In \cite{BR11}, Belitskii and Rayskin further showed that the conjugacy in \eqref{loc.conju} can be chosen to be H\"older continuous in Banach spaces. Subsequently, Barreira and Valls (\cite{BV06}) extended the Hartman-Grobman theorem to nonuniformly hyperbolic systems, and the H\"older regularity of the corresponding conjugacy was established by Backes and Dragi\v{c}evi\'c (\cite{BD22}).
For random dynamical systems, Zhao and Shen (\cite{ZS20}) obtained a random version of the Hartman--Grobman theorem, proving the existence of a random topological conjugacy between a random diffeomorphism and its associated linear cocycle. Moreover, Barreira and Valls (\cite{BV06P}) proved that such a random topological conjugacy possess H\"older regularity. In the setting of vector bundles, the conjugacy equation \eqref{loc.conju} was solved in \cite{BK94}, where the conjugacy is only a homeomorphism.

In this paper, we prove H\"older regularity of the Hartman-Grobman theorem of the setting of a vector bundle. It is established that the conjugacy are both H\"older continuous along fiber and base direction. Now we are going to introduce the setup and state our results.

Let $X$ be a compact metric space, $E$ a continuous vector bundle over $X$, and $\pi: E\to X$ a continuous projection from $E$ to $X$. Assume that $E$ admits a continuous Riemannian metric. 

Let 
$A: E \to E$
be a continuous bundle automorphism covering a homeomorphism \( f: X \to X \), meaning that the following diagram commutes:
\[
\begin{array}{ccc}
E & \xrightarrow{A} & E \\
\downarrow & & \downarrow \\
X & \xrightarrow{f} & X.
\end{array}
\]
We say that \( A \) is a \emph{hyperbolic automorphism} if there exists a continuous splitting
\[
E_x = E^s_x \oplus E^u_x \quad \text{for all } x \in X
\]
such that:
\begin{itemize}
  \item The splitting is \textbf{invariant}, i.e.,
  \[
  A(E^s_x) = E^s_{f(x)}, \quad A(E^u_x) = E^u_{f(x)};
  \]
  \item There exist constants \( C > 0 \) and \( \lambda \in (0, 1) \)  such that for all \( n \geq 0 \):
  \[
  \|A^n(v)\|_{f^n(x)} \leq C \lambda^n \|v\|_x \quad \text{for } v \in E^s_x,
  \]
  \[
  \|A^{-n}(v)\|_{f^{-n}(x)} \leq C \lambda^n \|v\|_x \quad \text{for } v \in E^u_x,
  \]
\end{itemize}
where $\| \cdot\|_x$ denotes the norm on the fiber $E_x$ over $x\in X$, induced by the continuous Riemannian metric. 
This means that vectors in the stable subbundle \( E^s :=\bigsqcup_{x\in M} E^s_x\) contract under forward iterations of \( A \), and vectors in the unstable subbundle \( E^u :=\bigsqcup_{x\in M} E^u_x\) contract under backward iterations. 

In section 2, we will show that there exists a new continuous norm (there is a continuous function $\|\cdot\|^*:E\to 
\mathbb R$ and restricted to each fiber it is a norm), which is uniformly equivalent to the original one such that there exists a constant $\tau\in (0,1)$ satisfying
\begin{align}
\| A(v)\|_{f(x)}^*&\le \tau \|v\|_x^* \quad \text{for } v\in E_x^s,    \label{contrac.}
\\
\| A^{-1}(v)\|_{f^{-1}(x)}^*&\le \tau \|v\|_x^* \quad \text{for } v\in E_x^u,\label{expans.}
\end{align}

Remark that since $A: E\to E$ is continuous and $X$ is compact, $\sup_{x\in X} \| A_x\|$ is bounded above and away from zero. Namely, there exist constants $\ell$ and $L$ such that 
\[ 0< \ell \le \sup_{x\in X} \| A_x\| \le L<\infty,\]
where $\|A_x\|$ stands for the operator norm calculated under the continuous norm $\|\cdot\|^*$.

Let $\phi: E\to E $ be a continuous map such that $\phi_x: E_x \to E_{f(x)}$ is 
 Lipschitz with Lip$(\phi_x) \le  \delta< \min\{ 1-\tau,\ell\}$ for all $x\in X$, $A_x+\phi_x: E_x \to E_x$ is a homeomorphism 
and 
\begin{equation}\label{bound-perturb.}
  B_\phi:=\sup_{x\in X} \sup_{v\in E_x}\|\phi_x(v)\|<\infty.  
\end{equation} 
Define a set $\mathcal{C}^{0,1}_b(E;\delta)$ to be a collection of $\phi$ satisfying the above conditions.

Now we state our results.

\begin{thm}\label{thm1}
  Assume that $A:E\to E$ is  a hyperbolic bundle automorphism covering $f:X\to X$.  Let $\phi: E\to E$ belong to the set  $\mathcal{C}^{0,1}_b(E;\delta)$.  Then there exists a homeomorphism ${\mathcal H}: E\to E$ such that
  \begin{equation}\label{conj.equa.}
 {\mathcal H}_{f(x)} \circ (A_x+\phi_x) = A_x \circ {\mathcal H}_x \qquad \text{for all} \quad x\in X.   
  \end{equation}  
  Moreover, ${\mathcal H}_x$ is chosen to be $\Id + h_x$ for all $x\in X$, where $h_x : E_x \to E_x$ is  bounded and H\"older continuous with exponent 
  \begin{equation}\label{expo.alp.}
\alpha\in \Big(0, \min \Big\{ \frac{\log(\tau+\delta)}{\log(\ell-\delta)}, \frac{\log \{(1-\delta)/\tau\}}{\log(L+\delta)} \Big\}\Big).      
  \end{equation}
  The inverse of $\mathcal H_x$ is also of the form $\Id + \tilde h_x$ for all $x\in X$, where $\tilde h_x : E_x \to E_x$ is also  bounded and H\"older continuous with the same exponent $\alpha$.
\end{thm}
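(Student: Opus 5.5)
We work throughout with the adapted norm $\|\cdot\|^*$ from Section 2, for which \eqref{contrac.} and \eqref{expans.} hold. We also write $P^s_x,P^u_x$ for the continuous projections onto $E^s_x,E^u_x$. Set $T_x:=A_x+\phi_x$. The conjugacy equation \eqref{conj.equa.} with $\mathcal H=\Id+h$ is equivalent to
\[
h_{f(x)}\circ T_x - A_x\circ h_x = -\phi_x .
\]
Splitting into stable and unstable parts, we obtain the fixed-point equations
\begin{align*}
P^s h_{f(x)} &= A_x P^s h_x\circ T_x^{-1} - P^s\phi_x\circ T_x^{-1},\\
P^u h_x &= A_x^{-1}\big(P^u h_{f(x)}\circ T_x + P^u \phi_x\big).
\end{align*}
The plan is to treat these as a map $\Gamma$ on the Banach space of sections $h=(h_x)$ with $\sup_x\sup_v\|h_x(v)\|^*<\infty$. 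This map is a contraction with rate $\tau$, because $\|A_x|_{E^s}\|^*\le\tau$ and $\|A_x^{-1}|_{E^u}\|^*\le\tau$, and composing with $T_x^{\pm1}$ does not change sup norms. Its unique fixed point $h$ exists and is bounded by $B_\phi/(1-\tau)$ up to projection constants. Continuity of $h$ on $E$, jointly in base and fiber, follows from the fact that $\Gamma$ preserves the closed subspace of continuous bounded sections. To see this, use that $T^{-1}$ is continuous on $E$: $T_x$ is a homeomorphism of each fiber, the inverse is Lipschitz with constant $(\ell-\delta)^{-1}$ since $\mathrm{Lip}(\phi_x)<\ell$, and the splitting is continuous.

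Next we show Hölder regularity by proving that $\Gamma$ maps the closed set $\{h:\ \|h_x(v)-h_x(w)\|^*\le K\|v-w\|^{*\alpha}\ \text{whenever}\ \|v-w\|^*\le 1\}$ into itself for large $K$. On the stable part we need $\mathrm{Lip}(T_x^{-1})\le (\ell-\delta)^{-1}$. Then
\[
\|P^s h_{f(x)}(v)-P^s h_{f(x)}(w)\|\le \tau K (\ell-\delta)^{-\alpha}\|v-w\|^\alpha + c\|v-w\|.
\]
On the unstable part we use $\mathrm{Lip}(T_x)\le L+\delta$, which gives a factor $\tau (L+\delta)^\alpha$. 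The exponent conditions in \eqref{expo.alp.} are exactly what makes these factors, perturbed by $\delta$, strictly less than $1$. More precisely, we use $(\tau+\delta)(\ell-\delta)^{-\alpha}<1$ and $(L+\delta)^\alpha\tau/(1-\delta)<1$. Some $\delta$-slack is needed, which is presumably why $\tau+\delta$ and $1-\delta$ appear; I would absorb the Lipschitz term $\delta\|v-w\|\le\delta\|v-w\|^\alpha$ for $\|v-w\|\le1$. For $\|v-w\|>1$ the boundedness of $h$ suffices. Because the Hölder set is closed in the sup norm and invariant, the fixed point lies in it. The main obstacle is this invariance computation, in particular handling large versus small $\|v-w\|$ when images under $T^{-1}$ leave the unit scale. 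I would first try working with the localized seminorm and bounding the cross term by $2\sup\|h\|$.

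For the inverse and the homeomorphism property, we solve symmetrically $\tilde{\mathcal H}=\Id+\tilde h$ with $\tilde{\mathcal H}_{f(x)}\circ A_x = T_x\circ\tilde{\mathcal H}_x$, i.e.
\[
\tilde h_{f(x)}\circ A_x - A_x\tilde h_x = \phi_x\circ(\Id+\tilde h_x).
\]
The map $\tilde h\mapsto$ (the right-hand side inverted by the hyperbolic operator) is a contraction with rate $(\tau+\delta)$ or $\tau/(1-\delta)$, since $\phi$ is $\delta$-Lipschitz; this is where the condition $\delta<1-\tau$ enters. The same Hölder invariance argument, now with the linear maps $A_x^{\pm1}$, gives $\tilde h$ Hölder with the same $\alpha$. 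Finally, $\mathcal H\circ\tilde{\mathcal H}$ and $\tilde{\mathcal H}\circ\mathcal H$ are of the form $\Id+$bounded and conjugate $A$ to itself, respectively $T$ to itself. Uniqueness of bounded solutions of these homogeneous equations (again by the contraction estimate) forces both compositions to equal $\Id$, so $\mathcal H$ is a homeomorphism with inverse $\Id+\tilde h$.
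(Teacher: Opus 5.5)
Your proposal is correct and follows essentially the paper's route. The paper proves Lemma~\ref{lm-analog-push} for $\mathcal H_{f(x)}\circ(A_x+\phi_x)=(A_x+\psi_x)\circ\mathcal H_x$ using the same ingredients you use: the stable/unstable fixed-point operator, a sup-norm contraction, an invariant H\"older class, and uniqueness of bounded solutions to get $\tilde{\mathcal H}\circ\mathcal H=\Id=\mathcal H\circ\tilde{\mathcal H}$. It then takes $\psi=0$, and swaps $\phi,\psi$ for the inverse, which is exactly your equation for $\tilde h$. The differences are minor: you use a unit-scale H\"older class and bound large jumps by $2\sup\|h\|$, whereas the paper uses a global H\"older class together with $2B_\phi$; and you get continuity of $(A+\phi)^{-1}$ from the uniform Lipschitz bound $(\ell-\delta)^{-1}$ rather than the paper's subsequence argument (both arguments implicitly read $\ell$ as a lower bound for $\|A_xu\|/\|u\|$).
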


We also want to study H\"older regularity along base direction, which is given as follows.
\begin{thm}\label{thm2}
Under the conditions of Theorem \ref{thm1}, we further assume that 
\begin{align}
\|\hat\Phi_{f(x)} \circ A_x \circ \Phi_x^{-1} - \hat \Phi_{f(\tilde x)} \circ A_{\tilde x} \circ \Phi_{\tilde x}^{-1} \| &\le M_A d(x,\tilde x)^\beta,  
\label{Holder-A}
\\
\|\hat\Phi_{f(x)} \circ \phi_x \circ \Phi_x^{-1}(\tilde v) - \hat \Phi_{f(\tilde x)} \circ \phi_{\tilde x} \circ \Phi_{\tilde x}^{-1}(\tilde v) \| &\le M_\phi d(x,\tilde x)^\beta\| \tilde v\|, 
\label{Holder-phi}
\end{align}
for all $x\in X$ and all $(\tilde x,\tilde v), (x,\tilde v)\in U\times \mathbb R^d$,
where $\Phi: \pi^{-1}(U)\to  U\times \mathbb R^d$ is a local trivialization near $x$,  $\hat \Phi: \pi^{-1}(V)\to  V\times \mathbb R^d$ is a local trivialization near $f(x)$, and $M_A>0, M_\phi>0$ and $0<\beta <1$ are constants independent of $x$, $\tilde x$ and $\tilde v$. We also assume that $f:X\to  X$ is a Lipschitz homeomorphism and $B_\phi$ defined by
\eqref{bound-perturb.} is sufficiently small. 

Then the functions $h$ and $\tilde h$ in Theorem \ref{thm1} both are H\"older continuous along base space $X$ with some exponent $0<\tilde \alpha \le \alpha\beta$. Namely, for each $x\in X$ and all  $(\tilde x,\tilde v), (x,\tilde v)\in U\times \mathbb R^d$, there exists a constant $\tilde M>0$ such that
\begin{align*}
    \| \Phi_x \circ h_x \circ \Phi_x^{-1}(\tilde v) - \Phi_{\tilde x} \circ h_{\tilde x} \circ \Phi_{\tilde x}^{-1}(\tilde v)\| &\le \tilde M d(x,\tilde x)^{\tilde \alpha} \| \tilde v\|^\alpha,
    \\
     \| \Phi_x \circ \tilde h_x \circ \Phi_x^{-1}(\tilde v) - \Phi_{\tilde x} \circ  \tilde h_{\tilde x} \circ \Phi_{\tilde x}^{-1}(\tilde v)\| &\le \tilde M d(x,\tilde x)^{\tilde \alpha} \| \tilde v\|^\alpha.
\end{align*}

\end{thm}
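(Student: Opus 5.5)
We will prove Theorem \ref{thm2} by going back to the construction of $h$ in Theorem \ref{thm1}. There $h$ is the unique bounded solution of
\[
h_{f(x)}\circ(A_x+\phi_x)-A_x\circ h_x=-\phi_x ,
\]
obtained as the fixed point of a contraction $\mathcal T$ on bounded sections, split into stable and unstable parts via the projections $P^s,P^u$ of the hyperbolic splitting. Iterating gives
\[
h^s_x=-\sum_{n\ge1}A^{n}\,P^s\phi\circ(\dots),\qquad h^u_x=\sum_{n\ge0}A^{-n-1}P^u\phi\circ(\dots),
\]
that is, series of compositions along orbits of $F=A+\phi$ and $A$ under the adapted norm $\|\cdot\|^*$ of Section 2. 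In local trivializations we write $\bar A_x=\hat\Phi_{f(x)}A_x\Phi_x^{-1}$ and $\bar\phi_x=\hat\Phi_{f(x)}\phi_x\Phi_x^{-1}$, and likewise for $h$. The goal is to estimate
\[
\Delta(x,\tilde x;\tilde v)=\bar h_x(\tilde v)-\bar h_{\tilde x}(\tilde v)
\]
term by term in these series.

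\textbf{Step 1: base-H\"older bounds for the $n$-th iterates.} Using \eqref{Holder-A}, \eqref{Holder-phi}, the Lipschitz property of $f$ (constant $\mathrm{Lip} f$), and the bounds $\|A_x\|\le L$, $\mathrm{Lip}(\phi_x)\le\delta$, we show by induction that
\[
\|\bar F^n_x(\tilde v)-\bar F^n_{\tilde x}(\tilde v)\|\le C\,K^n d(x,\tilde x)^\beta(1+\|\tilde v\|),
\]
with $K=\max\{L+\delta,\mathrm{Lip} f\}^{\beta}$ up to constants. The same bound holds for $\bar A^n$ and for the backward iterates, with $\ell-\delta$ in place of $L+\delta$. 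The splitting is only continuous, so we also need $P^s,P^u$ to be base-H\"older. We intend to get this from \eqref{Holder-A}: characterize $E^s$ as the set of vectors with forward bounded orbits and use the standard cone/graph-transform argument, which yields some exponent $\beta'\le\beta$. This exponent loss is one reason $\tilde\alpha$ may be smaller than $\alpha\beta$.

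\textbf{Step 2: the interpolation trick.} Each term of the series admits two bounds.

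1. A crude bound: $2\tau^n B_\phi$, using the fiber estimate and the boundedness of $\phi$. When the perturbation is evaluated at $\tilde v$ via Lipschitz continuity, this becomes $\tau^n\cdot(\text{const})\cdot\|\tilde v\|$-type growth.
2. A difference bound: $\tau^n K^n d^\beta$ times polynomial factors in $\|\tilde v\|$, obtained from Step 1.

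Interpolating as $\min\{a,b\}\le a^{1-\theta}b^\theta$ for terms $n\le N$, and using the crude bound for the tail $n>N$, we then optimize $N\sim\log d/\log(\ldots)$. This produces $d^{\tilde\alpha}$ with $\tilde\alpha\le\alpha\beta$, where $\alpha$ enters exactly as in \eqref{expo.alp.} through the ratio of $\log(\tau+\delta)$ to $\log(L+\delta)$. To obtain the factor $\|\tilde v\|^\alpha$ instead of $1+\|\tilde v\|$, we combine the bound with the boundedness of $h$ (uniform in $x$) for large $\|\tilde v\|$, and with the fiber H\"older estimate of Theorem \ref{thm1} near $0$.

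\textbf{Step 3: the inverse.} For $\tilde h$ we repeat the argument with the roles of $A+\phi$ and $A$ exchanged: $\tilde h$ solves the analogous equation $\tilde h_{f(x)}\circ A_x-(A_x+\phi_x)\circ\tilde h_x=\phi_x\circ(\Id+\tilde h_x)$. This is again a contraction, and it is here that the smallness of $B_\phi$ is used. The smallness makes the nonlinear term's base-H\"older constant small enough to close the fixed-point argument in a space of sections with finite seminorm $\sup\|\Delta\|/(d^{\tilde\alpha}\|\tilde v\|^\alpha)$. We prefer to run the fixed-point argument directly in this H\"older-weighted space for both $h$ and $\tilde h$: we show that $\mathcal T$ preserves a closed ball of it, so the limit inherits the bound.

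\textbf{Main obstacle.} The hardest part is the bookkeeping across local trivializations. The transition maps $\hat\Phi_{f(x)}\circ\Phi_x^{-1}$ appear at every iterate, and the orbits $f^n(x)$ and $f^n(\tilde x)$ must lie in common chart domains. Since $f$ is only Lipschitz, this holds only for $n\lesssim\log(1/d)$, which is exactly the cutoff $N$ in Step 2. Beyond $N$ we rely solely on the crude bound. We will fix a finite atlas with a Lebesgue number $r$ and treat $d(x,\tilde x)\ge r/(\mathrm{Lip} f)^N$ trivially.
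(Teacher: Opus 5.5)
There is a genuine gap. You sketch two strategies without committing to either. The one you actually develop (Steps 1--2) fails exactly where the weight $\|\tilde v\|^\alpha$ has to be produced, and the one you say you prefer (end of Step 3) is stated without its key estimate.

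In Steps 1--2, neither of your bounds vanishes at $\tilde v=0$. Your iterate bound carries the factor $1+\|\tilde v\|$. The tail bound $2\tau^nB_\phi$ you use beyond $N$ carries no $\|\tilde v\|$ factor at all. Its Lipschitz variant is really $\tau^n\delta(L+\delta)^n\|\tilde v\|$, which is not summable by itself. Combining the resulting uniform bound $Cd(x,\tilde x)^{\gamma}$ with the fiber estimate $\|\Delta\|\le 2M\|\tilde v\|^\alpha$ only gives $\min\{Cd^{\gamma},2M\|\tilde v\|^\alpha\}$. This is not $\le\tilde M d^{\tilde\alpha}\|\tilde v\|^\alpha$ for any $\tilde\alpha>0$: take $\|\tilde v\|^\alpha=d^{s}$ with $s>\gamma$ and let $d\to0$.

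To repair this route, keep the product structure of \eqref{Holder-A}--\eqref{Holder-phi} through the iterates. Then the $n$-th term is bounded by $C\tau^n\min\{B_\phi,\,K^n d^{\beta}\|\tilde v\|\}$ while $f^jx,f^j\tilde x$ share charts, and by $C\tau^n\min\{B_\phi,\,\delta(L+\delta)^n\|\tilde v\|\}$ always (with $(\ell-\delta)^{-n}$ for the stable series). Now interpolate term by term with exponent exactly $\alpha$, which needs $\tau K^\alpha<1$ on top of \eqref{expo.alp.}. Alternatively, interpolate your minimum using a fiber exponent $\alpha'>\alpha$, which is available because the range in \eqref{expo.alp.} is open.

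Either way you need $\phi_x(0_x)=0$, hence $h_x(0_x)=0$, and this is not a technicality. At $\tilde v=0$ the conclusion forces $\Phi_x\circ h_x\circ\Phi_x^{-1}(0)$ to be locally constant in $x$. That fails for the trivial line bundle over an irrational rotation with $A_x=a(x)\ge2$ nonconstant and Lipschitz and $\phi\equiv c\ne0$: there $h_x\equiv k(x)=c\sum_{n\ge0}\prod_{j=0}^{n}a(f^jx)^{-1}$, which is nonconstant. The paper uses $\phi_x(0_x)=0$ tacitly as well (it bounds $\|\hat\Phi\circ\sigma\circ(A+\phi)^{-1}\circ\Phi^{-1}(\tilde v)\|$ by $M(\ell-\delta)^{-\alpha}\|\tilde v\|^\alpha$), and it does hold in the setting of Theorem \ref{thm3}.

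The route you say you prefer is to show that $\mathcal T$ maps into itself the closed set of $\sigma\in\mathcal C^\alpha_b(E)$ with $\|\Delta\|\le\tilde M d^{\tilde\alpha}\|\tilde v\|^\alpha$. That is exactly the paper's proof. With it, Steps 1--2 and the cutoff $N\sim\log(1/d)$ become unnecessary, since $\mathcal T$ involves only one step of $f^{\pm1}$. But saying that a closed ball is preserved only restates the conclusion; the content is the one-step estimate.

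Split $\mathcal T\sigma$ into its $P$- and $Q$-parts and insert transition maps. The terms containing the unknown constant $\tilde M$ are those where $\sigma$ is composed with $(A+\phi)^{\pm1}$, and, for $\tilde h$, the term $\phi\circ(\Id+\sigma)$. These come back with factors $(\tau+\delta)\ell_f^{\tilde\alpha}(\ell-\delta)^{-\alpha}$ and $\tau\bigl(C(L+\delta)^\alpha L_f^{\tilde\alpha}+C^2\delta\bigr)$, where $\ell_f,L_f$ are the Lipschitz constants of $f^{-1},f$. All other terms are bounded by $\bar C d^{\tilde\alpha}\|\tilde v\|^\alpha$ with $\bar C$ independent of $\tilde M$. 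This uses the fiber constant $M$, $\tilde\alpha\le\alpha\beta$, and, for $\|\tilde v\|\ge1$, the interpolation $\min\{2B_\phi,M_\phi d^\beta\|w\|\}\le(2B_\phi)^{1-\alpha}(M_\phi d^\beta\|w\|)^\alpha$. Choose $\alpha,\tilde\alpha$ so small that both factors are below $1$ (the paper's \eqref{less-one}, possible since $\tau+\delta<1$ and $\tau(1+\delta)<1$), then take $\tilde M$ large; this closes the argument.

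Two of your observations are correct and needed in either route. First, the projection $P$ must itself be base-H\"older; the paper applies \eqref{Holder-A} to $A^s=P\circ A$ and $(A^u)^{-1}$ without comment. Second, the neighbourhoods must have uniform size in a fixed finite atlas.

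Two small slips:
\begin{enumerate}
\item The growth rate in Step 1 is of order $(L+\delta)(\mathrm{Lip}\,f)^{\beta}$, not $\max\{L+\delta,\mathrm{Lip}\,f\}^\beta$.
\item The equation for $\tilde h$ is $\tilde h_{f(x)}\circ A_x-A_x\circ\tilde h_x=\phi_x\circ(\Id+\tilde h_x)$, without the extra term $\phi_x\circ\tilde h_x$.
\end{enumerate}
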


\begin{rmk}\label{rmk1}
 We emphasize that each fiber of the local product $U\times \mathbb R^d$ for local trivialization $\Phi$ near $x$ takes the norm on fiber $E_x$. In other words, for product $U\times \mathbb R^d$, each fiber takes the norm:
\[ \|\tilde v\|_U:=\|\Phi_x^{-1}(\tilde v)\|_x \quad \text{for all } \quad (\tilde x,\tilde v)\in U\times \mathbb R^d. \]
Similarly, each fiber of $V\times \mathbb R^d$ is endowed with the norm:
\[ \|\tilde v\|_V:=\|\hat \Phi_{f(x)}^{-1}(\tilde v)\|_{f(x)} \quad \text{for all} \quad (f(\tilde x),\tilde v)\in V\times \mathbb R^d. \]
Since the norm on vector bundle $E$ is continuous, we conclude that there exists a sufficiently small $\epsilon>0$ such that
\begin{align*}
    (1-\epsilon)\|\Phi_{\tilde x}^{-1}(\tilde v)\|_{\tilde x}\le  \|\tilde  v\|_U \le (1+\epsilon)\|\Phi_{\tilde x}^{-1}(\tilde v)\|_{\tilde x} \quad \text{for all } \tilde x\in U \text{ and all } \tilde v\in\mathbb R^d,
    \\
    (1-\epsilon)\|\hat \Phi_{\tilde x}^{-1}(\tilde v)\|_{\tilde x}\le  \|\tilde  v\|_V \le (1+\epsilon)\|\hat \Phi_{\tilde x}^{-1}(\tilde v)\|_{\tilde x} \quad \text{for all } \tilde x\in V \text{ and all } \tilde v\in\mathbb R^d,
\end{align*}
by shrinking $U$ and $V$.   
\end{rmk}

\begin{rmk}
    Note that the H\"older regularity of $h$ and $\tilde h$ in Theorem \ref{thm2} does not hold for all   $0<\tilde \alpha\le \alpha \beta$ and all $\alpha$ satisfying \eqref{expo.alp.}. They are chosen to such that 
    all conditions $0<\tilde \alpha\le \alpha \beta$, \eqref{expo.alp.} and  \eqref{less-one} hold.
\end{rmk}

Using cut-off technique and partition of unity, Theorems \ref{thm1} and \ref{thm2} yield the local version of Hartman-Grobman Theorem on smooth vector bundle.

\begin{thm}\label{thm3}
    Assume that $(E, X, \pi)$ is a $C^2$ smooth vector bundle of rank $d$, where $X$ is a $C^1$ compact Riemannian manifold, $f:X\to X$ is a $C^1$ diffeomorphism, and  $F: E \to E$ is a $C^2$ diffeomorphism covering $f$ and its derivative $A$ at zero section is hyperbolic. Then $F$ is locally topologically conjugate to its linear part $A$ near  zero section. In addition, the local conjugacy both are H\"older continuous along base and fiber direction.
\end{thm}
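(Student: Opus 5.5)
We plan to deduce Theorem \ref{thm3} from Theorems \ref{thm1} and \ref{thm2} by a cut-off argument. Write $F$ near the zero section as $F_x(v)=A_xv+g_x(v)$, where $A_x=D_vF_x(0)$, $F_x:E_x\to E_{f(x)}$ is the fiberwise restriction, and $g_x(0)=0$, $D_vg_x(0)=0$. Since $F$ is a diffeomorphism covering $f$, it maps the zero section to itself, so $F_x(0)=0$. First we replace the given Riemannian metric by the continuous norm $\|\cdot\|^*$ from Section 2, which satisfies \eqref{contrac.}--\eqref{expans.}; this fixes $\tau,\ell,L$, and we then choose $\delta<\min\{1-\tau,\ell\}$. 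Because $F$ is $C^2$ and $X$ is compact, there is a uniform bound $\|D_vg_x(v)\|\le C\|v\|$ on a neighborhood $\{\|v\|^*\le r_0\}$ of the zero section. The same bound holds for the base-direction derivative of $g$ read in local trivializations.

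Next we build the cut-off. Pick a smooth bump $\rho:[0,\infty)\to[0,1]$ with $\rho=1$ on $[0,1]$, $\rho=0$ on $[2,\infty)$ and $|\rho'|\le 2$. Since $\|\cdot\|^*$ is only continuous, we do not cut off with $\|v\|^*$ directly. Instead we take a finite atlas of trivializations $\Phi^i:\pi^{-1}(U_i)\to U_i\times\mathbb R^d$ with a subordinate partition of unity $\{\psi_i\}$ on $X$, and define a smooth function $N(v)=\sum_i\psi_i(\pi v)\,|\Phi^i_{\pi v}v|$. Each $|\cdot|$ is the Euclidean norm, which is uniformly equivalent to $\|\cdot\|^*$. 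We then set $\phi_x(v)=\rho(N(v)/r)\,g_x(v)$.

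For $r$ small we check that $\phi\in\mathcal C^{0,1}_b(E;\delta)$. The Lipschitz constant of $\phi_x$ is bounded by $\sup\|D g\|+\sup|\rho'|\,\|g\|/r\le C'r$, which is less than $\delta$. We also have $B_\phi\le Cr^2$, which is as small as we like. Finally, $A_x+\phi_x$ is a homeomorphism by a Lipschitz inverse-function argument, since $\operatorname{Lip}(\phi_x)<\ell\le\|A_x^{-1}\|^{-1}$.

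For the base-direction hypotheses \eqref{Holder-A}--\eqref{Holder-phi}, we use that $A$ is $C^1$ (the bundle is $C^2$ and $F$ is $C^2$), which gives \eqref{Holder-A} with $\beta$ arbitrary in $(0,1)$. For \eqref{Holder-phi}, we estimate the difference of $\hat\Phi\circ\phi\circ\Phi^{-1}$ at two base points by the mean value theorem in $x$. The $x$-derivative of the trivialized $g$ is $O(\|\tilde v\|)$ because $g$ and $D_vg$ vanish on the zero section, and the $x$-derivative of the cut-off factor is bounded by $C\|\tilde v\|/r$ on its support, where $\|g\|=O(r^2)$. This yields a bound $M_\phi d(x,\tilde x)\|\tilde v\|$. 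Also $f$ is $C^1$ on a compact manifold, hence Lipschitz. Theorems \ref{thm1} and \ref{thm2} then give $\mathcal H=\Id+h$ conjugating $A+\phi$ to $A$, Hölder along fibers and along the base.

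Since $\phi=g$ on $\{N(v)\le r\}$, the map $A+\phi$ agrees with $F$ there. Hence $\mathcal H$ restricted to a neighborhood $W$ of the zero section with $W\cup F(W)\subset\{N\le r\}$ is a local conjugacy with $\mathcal H\circ F=A\circ\mathcal H$ on $W$. I expect the main obstacle to be the smoothness and the uniform estimates for the cut-off. The fiber norm is only continuous, so the partition-of-unity construction of $N$ is needed. One must also verify that the constants in the base-direction estimate \eqref{Holder-phi} are uniform across overlapping trivializations, and that the change of norm to $\|\cdot\|^*$ is compatible with Remark \ref{rmk1}.
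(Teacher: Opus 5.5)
Your proposal is correct and follows the same route as the paper. You cut off the nonlinearity with a bump built from local trivializations and a partition of unity, verify the hypotheses of Theorems \ref{thm1} and \ref{thm2} for small $r$, and restrict the global conjugacy to where $A+\phi=F$; your explicit scalings $\operatorname{Lip}(\phi_x)\le C'r$, $B_\phi\le Cr^2$ and the $O(\|\tilde v\|)$ base-derivative bound make precise what the paper does with the mean value theorem and the mixed second derivative of $\tilde F$. One small correction: covering $f$ does not by itself force $F$ to fix the zero section (e.g.\ $F_x(v)=A_xv+s(f(x))$ for a nonzero smooth section $s$), so $F(O_x)=O_{f(x)}$ should be treated as an implicit hypothesis of the theorem, exactly as the paper does when it assumes $\phi(O_x)=O_{f(x)}$.
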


\begin{rmk}
The classical Hartman-Grobman theorem says that a $C^1$ local diffeomorphism admits $C^0$ local linearization. While in our vector bundle setting we require that $F:E\to E$ is $C^2$ because we need that the partial derivative of $F$ along fiber direction is also differentiable along base direction in order  to satisfy conditions of Theorem \ref{thm2}.
\end{rmk}


\section{Uniformly equivalent continuous norm}

In this section, we construct a new continuous norm $\|\cdot\|^*$ on $E$ under which we have \eqref{contrac.} and \eqref{expans.}.
\begin{prop}
    There exists a continuous norm $\|\cdot\|^*$ on $E$ under which we have \eqref{contrac.} and \eqref{expans.}. In addition, the new continuous norm is equivalent to the original one. Namely, there exist constants $r_1, r_2>0$ independent of $x$ such that 
\[ r_1\| v\|_x \le \| v\|_x^* \le r_2\| v\|_x \qquad \text{for each } v\in E_x \text{ and each } x\in X.\]
\end{prop}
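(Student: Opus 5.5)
The plan is to use the standard adapted (Mather) norm construction, built from finite sums of the original norms along orbits, with the stable and unstable pieces glued via the continuous splitting. Fix an integer $N\ge 1$ with $C\lambda^N<1$. For $v\in E_x$ write $v=v^s+v^u$ with $v^s\in E^s_x$, $v^u\in E^u_x$. I will define
\[
\|v^s\|_x^{s}:=\sum_{n=0}^{N-1}\|A^n v^s\|_{f^n(x)},\qquad
\|v^u\|_x^{u}:=\sum_{n=0}^{N-1}\|A^{-n} v^u\|_{f^{-n}(x)},
\]
and set $\|v\|_x^*:=\max\{\|v^s\|_x^s,\|v^u\|_x^u\}$. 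Each piece is a seminorm on $E_x$ that is definite on its own subbundle, so $\|\cdot\|_x^*$ is a norm on $E_x$.

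For \eqref{contrac.}, take $v\in E^s_x$. A telescoping shift gives
\[
\|Av\|^s_{f(x)}=\sum_{n=1}^{N}\|A^nv\|=\|v\|^s_x-\|v\|_x+\|A^Nv\|\le \|v\|^s_x-(1-C\lambda^N)\|v\|_x .
\]
Since $\|v\|^s_x\le \sum_{n<N}C\lambda^n\|v\|_x\le NC\|v\|_x$, this yields $\|Av\|^s\le \big(1-\tfrac{1-C\lambda^N}{NC}\big)\|v\|^s=:\tau\|v\|^s$ with $\tau\in(0,1)$. The unstable case \eqref{expans.} is symmetric using $A^{-1}$. Because the splitting is invariant, on $E^s$ the $*$-norm equals the $s$-piece and on $E^u$ it equals the $u$-piece, so both inequalities hold.

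For equivalence, the upper bound comes from $\|v^s\|^s\le NC\|v^s\|$ and $\|v^u\|^u\le NC\|v^u\|$, combined with a uniform bound $\|v^{s}\|,\|v^u\|\le K\|v\|$. This bound holds because the projections $P^s_x,P^u_x$ are continuous in $x$ (the splitting is continuous) and $X$ is compact. The lower bound is $\|v\|\le\|v^s\|+\|v^u\|\le \|v^s\|^s+\|v^u\|^u\le 2\|v\|^*$. Hence $r_1=1/2$ and $r_2=NCK$ work.

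Continuity of $\|\cdot\|^*:E\to\mathbb R$ follows because $A^{\pm n}$ are continuous bundle maps, the original norm is continuous on $E$, and $v\mapsto v^s,v^u$ are continuous on $E$ by continuity of the splitting. The main obstacle is this step: I expect the uniform boundedness and continuity of the projections $P^{s/u}_x$. To handle it, I will take local trivializations, in which the splitting is represented by continuously varying complementary subspaces. Then $P^s$ is locally a continuous matrix-valued function, hence bounded on compact pieces, and a finite subcover of $X$ gives the uniform constant $K$.
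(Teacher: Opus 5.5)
Your proposal is correct and is essentially the paper's argument: the paper first passes to the max-norm $\|v\|'_x=\max\{\|v^s\|_x,\|v^u\|_x\}$ and sums that along orbits, but since $\|w\|'=\|w\|$ for $w$ in $E^s$ or $E^u$, its $\|\cdot\|^*$ coincides with yours. The telescoping estimate, the uniform bound on the projections via local trivializations and compactness, and the equivalence constants also match, up to the paper using $B$ where you use $NC$.
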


\begin{proof}
 For each $x\in X$ and $v\in E_x$, there exist unique $v^s \in E^s_x$ and $v^u\in E_x^u$ such that $v=v^s+v^u$. Define a projection $P: E\to E$ to be 
\[ P(x,v):= (x,v^s)\qquad \text{for each } x\in X \text{ and each } v\in E_x.\]
We claim that $P$ is continuous on $E$. Indeed, letting $\{(x_n,v_n)\}_{n\ge 0}\subset E$ converge to $(x_\infty,v_\infty)$, we need to show that $\{(x_n,v_n^s)=P(x_n,v_n)\}_{n\ge 0}\subset E$ converge to $(x_\infty, P_xv_\infty:=v^s_\infty)$. Since $E=E^s\bigoplus E^u$ is a continuous splitting, we have that
there is a neighborhood $U\subset X$ of $x_\infty$ such that there exist a local continuous frame $(\sigma_i)_{i=1}^k$ over $U$ for $E^s$
and a local continuous frame $(\sigma_i)_{i=k+1}^d$ over $U$ for $E^u$. Moreover, $(\sigma_i)_{i=1}^d$ forms a continuous local frame for $E$. We may assume that $\{(x_n,v_n)\}_{n\ge 0} \subset \pi^{-1}(U)$ since it converges to $(x_\infty,v_\infty)$. Then for each $n\ge 0$
\[v_n= \sum_{i=1}^da^i_n\sigma_i|_{x_n},\]
where $\{a_n^i\}_{i=1}^d$ are some real numbers, and 
\[v_\infty= \sum_{i=1}^da^i_\infty\sigma_i|_{x_\infty},\]
for some real numbers $\{a_\infty^i\}_{i=1}^d$. It is easy to see that the convergence $(x_n,v_n)\to (x_\infty,v_\infty)$ as $n\to \infty$ is equivalent to $x_n\to x_\infty$ and $a_n^i \to a_\infty^i$ as $n\to \infty$ for all $i=1,\cdots,d$, and that for all $n\ge 0$
\[ v^s_n := \sum_{i=1}^ka^i_n\sigma_i|_{x_n} \qquad \text{and}\qquad v_\infty^s= \sum_{i=1}^ka^i_\infty\sigma_i|_{x_\infty}. \]
Consequently, $\{(x_n,v_n^s)=P(x_n,v_n)\}_{n\ge 0}\subset E$ converges to $(x_\infty, P_xv_\infty:=v^s_\infty)$. The claim is proved. 

Define a projection $Q: E\to E$ to be 
\[Q (x,v) := (x,v-P_x v)=(x,v^u) \qquad \text{ for each } (x,v) \in E. \]
Obviously, $Q$ is continuous on $E$ because so is $P$. Now define a norm $\|\cdot\|_x'$ on $E_x$ for each $x\in X$ by 
\[\|v\|_x':= \max \{ \|v^s\|_x, \|v^u\|_x\}. \]
Apparently, $\|\cdot\|'$ is continuous on $E$ because $P$ and $Q$ both are continuous. We claim that there exists a constant $\tilde C>0$ independent of $x$ such that 
\[ \frac{1}{2}\| v\|_x  \le \| v\|_x' \le \tilde C\| v\|_x \qquad \text{ for each } x\in X \text{ and each } v\in E_x.\]
In fact, given fixed $x\in X$, for each $v\in E_x$, $\|v\|_x = \| v^s+v^u\|_x \le \|v^s\|_x+\|v^u\|_x \le 2 \|v\|'_x$, which shows that $\frac{1}{2}\|v\|_x \le \|v\|'_x$. For the converse direction, We need the following lemma to help, whose proof is given after finishing this proof.
\begin{lm}\label{lm-unif.-equia.-norm}
There exist two constants $C_1, C_2>0$ independent of $x$ such that
\begin{equation}\label{loc.equi.Eucli.}
    C_1 \| v\| \le \| \Phi^{-1}_x (v)\|_x \le C_2 \|v\| \qquad \text{ for each } (x,v)\in  V\times \mathbb R^d,
\end{equation}    
where $\|\cdot\|$ stands for the Euclidean norm of $\mathbb R^d$ and $\Phi: \pi^{-1}(V) \to V\times \mathbb R^d$ is a continuous local trivialization near $x$.
\end{lm}  
Since $X$ is compact and $P$ is continuous,  there exists a constant $C_3>0$ independent of $x$ such that for each $x\in X$ there is a local trivialization $\Phi: \pi^{-1}(V) \to V\times \mathbb R^d$ satisfying
\begin{equation}
    \| \tilde P_x (v)\| \le C_3 \| v\| \qquad \text{for each } (x,v)\in V\times \mathbb R^d,
\end{equation}
where $\|\cdot\|$ stands for the Euclidean norm of $\mathbb R^d$ and $\tilde P_x(v)$ is defined to be $\tilde P_x(v):= \Phi_x \circ P_x \circ \Phi_x^{-1}(v)$. Therefore, for each $x\in X$ and each $v\in E_x$, there exist a local trivialization $\Phi$ near $x$ and $\tilde v\in\mathbb R^n$ such that  
\begin{align*}
    \|P_x (v)\|_x&=  \| P_x \circ \Phi_x^{-1}(\tilde v)\|'_x 
    \\
    &= \| \Phi_x^{-1}\circ \Phi_x\circ P_x \circ \Phi_x^{-1}(\tilde v)\|_x
    \\
    & = \| \Phi_x^{-1}(\tilde P_x(\tilde v))\|_x
    \\
    & \le C_2 \|\tilde P_x(\tilde v)\| \le C_2C_3 \| \tilde v\| \le \frac{C_2C_3}{C_1}\|\Phi_x^{-1}(\tilde v)\|_x
    \\
    & =: \tilde C_1 \|v\|_x.
\end{align*}
Analogously, we are able to show that there exists a constant $\tilde C_2$ independent of $x$ such that 
\[ \|Q_x(v)\|_x \le \tilde C_2 \| v\|_x\qquad \text{for each } x\in X \text{ and each } v\in E_x.\]
Thus, we have that  for each $x\in X$ and each $v\in E_x$
\[ \|v\|_x' = \max \{ \|v^s\|_x, \|v^u\|_x\} = \max \{ \|P_x(v)\|_x, \|Q_x(v)\|_x \} \le \max \{ \tilde C_1 , \tilde C_2\} \| v\|_x ,\]
where proves the claim with $\tilde C:= \max \{ \tilde C_1 , \tilde C_2\}.$ By the definition of $\|\cdot\|_x'$, we see that for all \( n \geq 0 \):
  \[
  \|A^n(v)\|_{f^n(x)}' \leq C \lambda^n \|v\|_x' \quad \text{for } v \in E^s_x,
  \]
  \[
  \|A^{-n}(v)\|_{f^{-1}(x)}' \leq C \lambda^n \|v\|_x' \quad \text{for } v \in E^u_x.
  \]

Let $N\in \mathbb{N}$ be sufficiently large such that 
\[C\lambda^N<1.\]
Define 
\begin{align}
    \|v\|_x^* &:= \sum_{n=0}^{N-1} \| A^n(v)\|_{f^n(x)}'  \qquad \text{for } v\in E_x^s,
    \notag\\
    \| v\|_x^* &:= \sum_{n=0}^{N-1} \|A^{-n}(v)\|_{f^{-n}(x)}'\qquad \text{for } v\in E_x^u,
    \notag\\
    \| v\|_x^* =\|v^s+v^u\|_x^* &:= \max \{ \|v^s\|_x^*, \|v^u\|_x^*\} \qquad \text{for } v\in E_x,
    \label{norm-star}
\end{align}
where $v^s\in E^s_x$ and $v^u\in E_x^u$. It is clear to check that for each $x\in X$ and $v\in E_x$
\begin{align*}
    \|v\|_x' &= \max \{ \|v^s\|_x', \|v^u\|_x'\} \le \max \{ \|v^s\|_x^*, \|v^u\|_x^*\}=\|v\|_x^*,
    \\
    \|v\|_x^* & \le \|v^s\|_x^* + \|v^u\|_x^* \le \Big\{ 1+\sum_{n=1}^{N-1}C\lambda^n\Big\}(\|v^s\|'_x+\|v^u\|'_x)\le B\|v\|_x',
\end{align*}
where 
\[1< B:=2\Big\{ 1+\sum_{n=1}^{N-1}C\lambda^n\Big\}<\infty.\]
This immediately yields that the norm $\|\cdot\|^*$ is uniformly equivalent to the original one $\|\cdot\|$. It is obvious that $\|\cdot\|^*$ is continuous on $E$.
Thus,
for $v\in E_x^s$,  we have 
\begin{align*}
    \|A(v)\|_{f(x)}^*  &= \sum_{n=1}^{N} \| A^n(v)\|_{f^n(x)}'  
   \\
& = \| v\|_x^*-  \| v\|_x' + \|A^N(v)\|_{f^N(x)}'
\\
& \le \| v\|_x^* -  \| v\|_x' + C\lambda^N \| v\|_x'
\\
& \le \Big\{ 1- \frac{1-C\lambda^N}{B} \Big\} \| v\|_x^*
\\
& =: \tau \|v\|_x^*.
\end{align*}
Clearly, $0<\tau <1$.
For $v\in E_x^u$,  we have
\begin{align*}
    \| A^{-1}(v)\|_{f^{-1}(x)}^* &= \sum_{n=1}^{N} \| A^{-n}(v)\|_{f^{-n}(x)}'  
   \\
& = \| v\|_x^*-  \| v\|_x' + \|A^{-N}(v)\|_{f^{-N}(x)}'
\\
& \le \| v\|_x^* -  \| v\|_x' + C\lambda^N \| v\|_x'
\\
& \le \Big\{ 1- \frac{1-C\lambda^N}{B} \Big\} \| v\|_x^*
\\
& =: \tau \|v\|_x^*.
\end{align*}
This proves \eqref{contrac.} and \eqref{expans.}.  The proof is complete.
\end{proof}

\begin{proof}[Proof of Lemma \ref{lm-unif.-equia.-norm}]
    Consider a continuous local trivialization for $E$
\[ \Phi : \pi^{-1}(U) \to U\times \mathbb R^d,\]
where $X \supset U\ni x$ is a neighborhood of $x$. Then we get a continuous norm on $U\times \mathbb R^d$ defined by
\[ \| \cdot\| \circ \Phi^{-1} : U\times \mathbb R^d \to \mathbb R_{\ge 0},\]
where $\mathbb R_{\ge 0}$ denotes the set of nonnegative real numbers.
Let $V$ be a neighborhood of $x$ such that $x\in V\subset \bar V \subset U$. Then $\|\cdot\|\circ \Phi^{-1}$ admits minimum and maximum on $\bar V\times \mathbb S^{d-1}$ because $\bar V\times \mathbb S^{d-1}$ is a compact subset of $U\times \mathbb R^d$. As a result, there exist constant $C_1, C_2>0$ such that for each $(x,v)\in \bar V\times \mathbb S^{d-1}$
\[0<C_1\le \|\Phi^{-1}_x(v)\|_x \le C_2 < \infty,\]
which gives rise to \eqref{loc.equi.Eucli.},
where $\|\cdot\|$ denotes the Euclidean norm of $\mathbb R^d$. Since $X$ is compact, there are finite many open sets $V_1,\cdots,V_\ell$ such that $\{ \pi^{-1}(V_i)\}_{i=1}^\ell$ covers $E$ and local trivializations $\{\Phi_i\}_{i=1}^\ell$ over $\{ \pi^{-1}(V_i)\}_{i=1}^\ell$ respectively such that \eqref{loc.equi.Eucli.} holds. Consequently, for each $x\in X$, there exists a local trivialization $\Phi: \pi^{-1} (V) \to V\times \mathbb R^d$ satisfying $\eqref{loc.equi.Eucli.}$ with $C_1$ and $C_2$ independent of $x$, where $V$ is a neighborhood of $x$.
\end{proof}

For simplicity of symbols, we still use $\|\cdot\|$ to stand for $\|\cdot\|^*$ on $E$ and omit the subindex $x$ of $\|\cdot\|_x$ when there is no confusion. In other words, we assume that there exists a continuous norm on $E$ and a number $0<\tau<1$ such that  for each $x\in X$
\[ \| A_x(v)\|\le  \tau \|v\| \quad \text{for } v\in E_x^s,\]
\[ \| A_x^{-1}(v)\|\le  \tau\|v\| \quad \text{for } v\in E_x^u.\]

\section{Proofs of Theorems \ref{thm1} and \ref{thm2}}

In order to prove Theorem \ref{thm1}, we need the following main lemma.
\begin{lm}\label{lm-analog-push}
Let $A$ be assumed as above and $\phi_x, \psi_x \in \mathcal{C}^{0,1}_b(E;\delta)$. Then
        equation   
   \begin{equation}\label{pertu.linea}
 {\mathcal H}_{f(x)} \circ (A_x+\phi_x) =( A_x +\psi_x) \circ {\mathcal H}_x \qquad \text{for all} \quad x\in X,  
  \end{equation}
has a homeomorphism solution ${\mathcal H}: E\to E$.
 Moreover, ${\mathcal H}_x: E_x\to E_x$ is chosen to be $\Id + h_x$ for all $x\in X$, where $h_x : E_x \to E_x$ is  bounded and H\"older continuous with exponent 
 \[\alpha\in \Big(0, \min \Big\{ \frac{\log(\tau+\delta)}{\log(\ell-\delta)}, \frac{\log \{(1-\delta)/\tau\}}{\log(L+\delta)} \Big\}\Big). \]
  The inverse of $\mathcal H_x$ is also of the form $\Id + \tilde h_x$ for all $x\in X$, where $\tilde h_x : E_x \to E_x$ is also  bounded and H\"older continuous with the same exponent $\alpha$.
\end{lm}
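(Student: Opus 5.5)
We follow the classical Hartman--Belitskii--Rayskin scheme, carried out fiberwise along orbits of $f$. Write $T_x:=A_x+\phi_x$ and $S_x:=A_x+\psi_x$. Substituting $\mathcal H_x=\Id+h_x$ into \eqref{pertu.linea} and cancelling the linear terms gives
\[
h_{f(x)}\circ T_x - A_x\circ h_x = \psi_x\circ(\Id+h_x)-\phi_x .
\]
Let $P,Q$ be the projections onto $E^s,E^u$ from Section 2, and split $h$ into its stable and unstable components. Since $T_x$ is a homeomorphism, the equation for the two components can be rewritten as
\[
P h_{f(x)} = \big[A_x P h_x + P\psi_x(\Id+h_x) - P\phi_x\big]\circ T_x^{-1},
\]
\[
Q h_x = A_x^{-1}\big[Q h_{f(x)}\circ T_x - Q\psi_x(\Id+h_x)+Q\phi_x\big].
\]
We regard the right-hand sides as an operator $\mathcal T$ acting on the Banach space $\mathcal B$ of bounded sections $h=\{h_x\}$ that are continuous on $E$, with the sup norm built from the adapted norm of Section 2 (the max-norm, so that $\|h\|=\max\{\|Ph\|,\|Qh\|\}$). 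Using \eqref{contrac.}, \eqref{expans.} and $\mathrm{Lip}(\psi_x)\le\delta$, we get the estimate $\mathrm{Lip}(\mathcal T)\le\tau+\delta<1$. Here one must be careful that $\|P\|,\|Q\|\le 1$ holds in the max-norm, which is exactly why the norm was built this way. So $\mathcal T$ is a contraction and has a unique fixed point $h$, which is bounded by a multiple of $B_\phi+B_\psi$. Continuity of $h$ on $E$ comes for free, since $\mathcal T$ preserves continuous sections: $P,Q,A,\phi,\psi$ are continuous, and $T^{-1}$ is continuous on $E$ by invariance of domain on fibers together with compactness.

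For the Hölder exponent, we show that $\mathcal T$ leaves invariant the closed subset $\mathcal B_{\alpha,K}$ of $h\in\mathcal B$ whose Hölder seminorm in the fiber variable satisfies $\|h_x(v)-h_x(w)\|\le K\|v-w\|^\alpha$. For the stable component we need $\mathrm{Lip}(T_x^{-1})\le 1/(\ell-\delta)$. The term $A_xPh_x\circ T_x^{-1}$ then contributes at most $\tau K(\ell-\delta)^{-\alpha}\|v-w\|^\alpha$, and the $\psi$-term contributes at most $\delta(1+K)(\ell-\delta)^{-\alpha}\|v-w\|^\alpha$. We only need this for $\|v-w\|\le 1$; for large distances boundedness of $h$ gives the estimate automatically. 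The requirement $(\tau+\delta)(\ell-\delta)^{-\alpha}<1$ is exactly the first constraint on $\alpha$, and it lets us choose $K$ large so that the set is invariant. For the unstable component, $\mathrm{Lip}(T_x)\le L+\delta$ and $\|A_x^{-1}|_{E^u}\|\le\tau$. This gives a factor $\tau(L+\delta)^\alpha$ on $K$, plus $\tau\delta(1+K)$. The hypothesis $\alpha<\log((1-\delta)/\tau)/\log(L+\delta)$, i.e. $\tau(L+\delta)^\alpha<1-\delta$, is tailored to keep this below $K$ for large $K$. A constant-bookkeeping check is needed here: the $\delta$ terms must fit into the margin $1-\delta$. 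The fixed point therefore lies in $\mathcal B_{\alpha,K}$, and $h$ is $\alpha$-Hölder.

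The main obstacle is showing that $\mathcal H$ is a homeomorphism. We do this by the symmetric trick. Solving the same equation with the roles of $\phi$ and $\psi$ exchanged gives $\tilde{\mathcal H}=\Id+\tilde h$ with $\tilde{\mathcal H}_{f(x)}\circ S_x=T_x\circ\tilde{\mathcal H}_x$, again bounded and $\alpha$-Hölder. The composition $G=\mathcal H\circ\tilde{\mathcal H}$ has the form $\Id+g$ with $g$ bounded and continuous, and satisfies $G_{f(x)}\circ S_x=S_x\circ G_x$. On the other hand, $g=0$ also solves the corresponding fixed-point problem with $\phi$ replaced by $\psi$. By uniqueness of the fixed point in $\mathcal B$ (the contraction argument above with $\phi=\psi$), we get $g=0$, i.e. $G=\Id$. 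Likewise $\tilde{\mathcal H}\circ\mathcal H=\Id$. Hence $\mathcal H_x$ is invertible with inverse $\Id+\tilde h_x$, and both are homeomorphisms of $E$ because each is continuous.
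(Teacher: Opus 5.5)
Your proposal is correct and follows essentially the same route as the paper: the same stable/unstable splitting into a fixed-point operator $\mathcal T$ on bounded continuous sections, contracting at rate $\tau+\delta$ in the max-norm; invariance of the closed fiberwise $\alpha$-H\"older class under exactly the two stated constraints on $\alpha$; and the symmetric trick (uniqueness for the $(\psi,\psi)$ and $(\phi,\phi)$ problems) to identify $\mathcal H_x^{-1}=\Id+\tilde h_x$.

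The one point to tighten is the regime $\|v-w\|>1$. Boundedness of $h$ alone does not give a bound independent of $\|h\|$ there, so either restrict also to the $\mathcal T$-invariant ball $\{\|h\|\le (B_\phi+B_\psi)/(1-\tau)\}$ and take $K\ge 2(B_\phi+B_\psi)/(1-\tau)$, or estimate that regime directly as the paper does. The direct estimate uses $\mathrm{Lip}(T_x^{-1})\le(\ell-\delta)^{-1}$, $\mathrm{Lip}(T_x)\le L+\delta$ and $2(B_\phi+B_\psi)\le 2(B_\phi+B_\psi)\|v-w\|^\alpha$.
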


\begin{proof}
  Let $P_x$ be the projection from $E_x$ to $E_x^s$ for each $x\in X$.
Letting ${\mathcal H}_x:=\Id+h_x$, the equation \eqref{pertu.linea} is written as 
\[ (\Id + h_{f(x)}) \circ (A_x+\phi_x) = (A_x+\psi_x) \circ (\Id + h_x),\]
which is reduced to 
\[ \phi_x + h_{f(x)} \circ (A_x+\phi_x)= A_x\circ h_x + \psi_x\circ (\Id+h_x).\]
Projecting this equation into the stable invariant bundle and unstable invariant bundle respectively, we obtain that
\begin{align*}
   & P_x \circ \phi_x + P_x \circ  h_{f(x)} \circ (A_x+\phi_x) = A_x^s \circ h_x+P_x\circ \psi_x\circ (\Id+h_x),
   \\
   & Q_x \circ \phi_x + Q_x \circ  h_{f(x)} \circ (A_x+\phi_x) = A_x^u \circ h_x + Q_x \circ \psi_x\circ (\Id+h_x),
\end{align*}
where $Q_x:= \Id - P_x$, $A_x^s := P_x \circ A_x$, and $A_x^u := Q_x \circ A_x$ for all $x\in X$. Thus, let $\phi^s_x:=P_x\circ \phi_x$, $\phi^u_x:=Q_x\circ \phi_x$, $\psi^s_x:=P_x\circ \psi_x$ and $\psi^u_x:=Q_x\circ \psi_x
$ for each $x\in X$ and we see that the equation \eqref{pertu.linea} reduces to 
\begin{align}
    h_x &= P_x \circ h_x + Q_x \circ h_x 
    \notag
    \\
    &= \{A_{f^{-1}(x)}^s \circ h_{f^{-1}(x)} + \psi^s_{f^{-1}(x)}\circ (\Id+h_{f^{-1}(x)})  - \phi^s_{f^{-1}(x)} \}  \circ (A_{f^{-1}(x)}+\phi_{f^{-1}(x)})^{-1}   \notag
    \\
    & \qquad + (A^u_x)^{-1} \circ \{ \phi^u_x + Q_x \circ  h_{f(x)} \circ (A_x+\phi_x) - \psi^u_x \circ (\Id+h_x)\}.\label{reduce.equa}
\end{align}

Consider a space 
\begin{align*}
\mathcal C_b(E)&:= \Big\{ \sigma: E\to E | \sigma \text{ is continuous,  } \sigma_x|_{E_x}: E_x \to E_x \text{ is bounded  }
\\
& \qquad \text{under the norm  defined by \eqref{norm-star} and } \sup_{x\in X}\sup_{v\in E_x}\|\sigma_x(v)\| <\infty\Big\}.
\end{align*}
It is easy to check that $\mathcal C_b(E)$ is a Banach space equipped with the norm 
\[ \|\sigma\|:= \sup_{x\in X}\sup_{v\in E_x}\|\sigma_x(v)\|\]
for each $\sigma\in \mathcal C_b(E)$.
Define an operator $\mathcal T: \mathcal C_b(E) \to \mathcal C_b(E)$ by defining $(\mathcal{T}\sigma)_x $ to be the right hand side of \eqref{reduce.equa} for all $\sigma\in \mathcal C_b(E)$ and for all $x\in X$ with $h$ replaced with $\sigma$, that is,
\begin{align}
(\mathcal{T}\sigma)_x &:=  \{A_{f^{-1}(x)}^s \circ \sigma_{f^{-1}(x)} +\psi^s_{f^{-1}(x)}\circ (\Id+\sigma_{f^{-1}(x)})  - \phi^s_{f^{-1}(x)} \}  \circ (A_{f^{-1}(x)}+\phi_{f^{-1}(x)})^{-1}   \notag
    \\
    & \qquad + (A^u_x)^{-1} \circ \{\phi^u_x + Q_x \circ  \sigma_{f(x)} \circ (A_x+\phi_x) - \psi^u_x \circ (\Id+\sigma_x) \}.\label{operator}
\end{align}
Apparently, a fixed point of $\mathcal T$ is a solution of equation \eqref{reduce.equa}. 

Now we are going to find a fixed point of $\mathcal T$. First of all, we verify that $\mathcal T: \mathcal C_b(E) \to \mathcal C_b(E)$ is well defined. To see that $\mathcal T(\sigma)$ is continuous on $E$ for each $\sigma\in E$, it suffices to show that $\varphi: E\to E$ defined by 
\[\varphi(x,v) = (f^{-1}(x), (A_{f^{-1}(x)}+\phi_{f^{-1}(x)})^{-1}(v) ) \qquad \text{ for } x\in X \text{ and } v\in E_x\]
is continuous. $f^{-1}$ is continuous by our assumption. Let $(x_n,v_n) \in U\times \mathbb R^n$ for all $n\ge 1$ and $(x_n,v_n)\to (x_\infty,v_\infty)\in U\times \mathbb R^n$ as $n\to \infty$, where $U \subset X$ is a neighborhood of $x_\infty$. We need to prove that 
\[\Phi \circ (A_{f^{-1}(x_n)}+\phi_{f^{-1}(x_n)})^{-1} \circ \Phi^{-1}(x_n,v_n) \to   \Phi \circ (A_{f^{-1}(x_\infty)}+\phi_{f^{-1}(x_\infty)})^{-1} \circ \Phi^{-1}(x_\infty,v_\infty) \]
as $n\to \infty$. Let 
\begin{align*}
    \tilde v_n &:= \Phi_{f^{-1}(x_n)} \circ (A_{f^{-1}(x_n)}+\phi_{f^{-1}(x_n)})^{-1} \circ \Phi_{x_n}^{-1}(v_n) \qquad \text{and}
    \\
    \tilde v_\infty &:= \Phi_{f^{-1}(x_\infty)} \circ (A_{f^{-1}(x_\infty)}+\phi_{f^{-1}(x_\infty)})^{-1} \circ \Phi_{x_\infty}^{-1}(v_\infty).
\end{align*}
If $\|\tilde v_n\|$ (or $\|\tilde v_{n_k}\|$)  approaches to $\infty$ as $n\to \infty$ ($k\to \infty$), where $\{n_k\}_{k\ge 1}$ is a subsequence of $\mathbb N$, then by \eqref{loc.equi.Eucli.} and the assumption that Lip$(\phi_x) \le  \delta< \ell$ for all $x\in X$
\[ \| v_n\| = \|\Phi_{f^{-1}(x_n)} \circ (A_{f^{-1}(x_n)} + \phi_{f^{-1}(x_n)})\circ \Phi_{x_n}^{-1}(\tilde v_n)\|\ge \kappa \|\tilde v_n\|,\]
where $\kappa>0$ is a constant, which implies that $\|v_n\| \to \infty$ as $n\to\infty$. While $v_n\to v_\infty$ as $n\to \infty$,  it is a contradiction. 
If $\tilde v_n \to \tilde v_\infty^* \ne \tilde v_\infty$ as $n\to \infty$, then by the continuity of $A+\phi$ we obtain that 
\[   \Phi_{f^{-1}(x_n)} \circ (A_{f^{-1}(x_n)}+\phi_{f^{-1}(x_n)}) \circ \Phi_{x_n}^{-1}(\tilde v_n) 
\to \Phi_{f^{-1}(x_\infty)} \circ (A_{f^{-1}(x_\infty)}+\phi_{f^{-1}(x_\infty)}) \circ \Phi_{x_\infty}^{-1}(\tilde v^*_\infty)\]
as $n\to \infty$. But 
\[\Phi_{f^{-1}(x_n)} \circ (A_{f^{-1}(x_n)}+\phi_{f^{-1}(x_n)}) \circ \Phi_{x_n}^{-1}(\tilde v_n) = v_n
\to v_\infty \quad \text{ as } \quad n\to \infty,\]
which means that $\tilde v_\infty$ and $\tilde v^*_\infty$ both are pre-image of $v_\infty$ under a local trivialization of $A+\phi$. This contradicts to the assumption that $A+\phi$ is a bijection.
Since $\sigma_x$, $\phi_x$, $\psi_x$ and $P_x$ all are bounded for each $x\in X$, we see that 
$(\mathcal{T}\sigma)_x$ is also bounded for each $x\in X$. 
Moreover, 
\begin{align*}
    \sup_{v\in E_x} \| (\mathcal{T}\sigma)_x(v)\|_x \le 2\lambda \|\sigma\| + (1+\lambda)B_\phi + (1+\lambda) B_\psi \qquad \text{for all $x\in X$,}
\end{align*}
where $B_\phi$ and $B_\psi$ are defined by \eqref{bound-perturb.}. It is proved that $\mathcal T: \mathcal C_b(E)\to \mathcal C_b(E)$ is well defined.

Next, we show that $\mathcal T$ is a contraction. For $\sigma^1,\sigma^2
\in \mathcal C_b(E)$, by \eqref{operator}, the definition of norm, \eqref{contrac.} and \eqref{expans.}, and the assumption that $\phi,\psi\in \mathcal{C}^{0,1}_b(E;\delta)$, we see that
\begin{align*}
   &\| P_x(\mathcal T\sigma^1)_x(v) -  P_x(\mathcal T\sigma^1)_x(v)\|
   \\
   &\le \tau \| \sigma^1_{f^{-1}(x)} \circ (A_{f^{-1}(x)}+\phi_{f^{-1}(x)})^{-1}(v) - \sigma^2_{f^{-1}(x)} \circ (A_{f^{-1}(x)}+\phi_{f^{-1}(x)})^{-1}(v)\|
   \\
   & \qquad + \delta  \| \sigma^1_{f^{-1}(x)} \circ (A_{f^{-1}(x)}+\phi_{f^{-1}(x)})^{-1}(v) - \sigma^2_{f^{-1}(x)} \circ (A_{f^{-1}(x)}+\phi_{f^{-1}(x)})^{-1}(v)\|
\end{align*}
and that 
\begin{align*}
   &\| Q_x(\mathcal T\sigma^1)_x(v) -  Q_x(\mathcal T\sigma^1)_x(v)\|  
   \\
   & \le \tau \| \sigma^1_{f(x)}\circ (A_x+\phi_x)(v) - \sigma^2_{f(x)}\circ (A_x+\phi_x)(v)\|
  + \tau \delta \| \sigma^1_x(v)-\sigma^1_x(v)\|.
\end{align*}
It follows that 
\begin{align*}
    &\| \mathcal T \sigma^1 - \mathcal T\sigma^2\|= \sup_{x\in X} \sup_{v\in E_x} \| (\mathcal T \sigma^1)_x(v) - (\mathcal T\sigma^2)_x(v)\|
    \\
    & = \sup_{x\in X} \sup_{v\in E_x}  \max \{ \| P_x(\mathcal T\sigma^1)_x(v) -  P_x(\mathcal T\sigma^1)_x(v)\|, \| Q_x(\mathcal T\sigma^1)_x(v) -  Q_x(\mathcal T\sigma^1)_x(v)\|  \}
    \\
    & \le (\tau +\delta)\| \sigma^1 -\sigma^2\|.
\end{align*}
This shows that $\mathcal T$ is a contraction since $\tau+\delta <1$ by our assumption. Consequently, by the Banach  contraction principle, $\mathcal T$ has a unique fixed point $h$ in $\mathcal C_b(E)$, which is a solution of equation \eqref{reduce.equa}. Thus, equation \eqref{pertu.linea} has a continuous solution $\mathcal H$ such that $\mathcal H_x: E_x\to E_x$ has the form $\Id + h_x$ with $h_x$ being bounded.

Now, let us consider a closed subset of $\mathcal C_b(E)$,
\begin{align*}
\mathcal C_b^\alpha(E)&:= \{ \sigma\in \mathcal C_b(E) : \|\sigma_x(v_1) - \sigma_x(v_2)\| \le M \| v_1-v_2\|^\alpha
\\
&\qquad\text{ for each $x\in X$ and all $v_1,v_2\in E_x$} \}, 
\end{align*}
where $M>0$ and $0<\alpha<1$ are two constants independent of $x$. We will show that $\mathcal T$ maps $\mathcal C_b^\alpha(E)$ into itself.
Indeed, if $\|v_1 - v_2\| \ge 1$, then
\begin{align*}
    &\| P_x (\mathcal T\sigma)_x(v_1) - P_x (\mathcal T\sigma)_x(v_2)\| 
    \\
    &\le \tau M\| (A_{f^{-1}(x)}+\phi_{f^{-1}(x)})^{-1}(v_1) -(A_{f^{-1}(x)}+\phi_{f^{-1}(x)})^{-1}(v_2)\|^\alpha 
    \\
    & \qquad+( 2B_\psi +2 B_\phi) \| v_1-v_2\|^\alpha
    \\
    & \le \Big\{ \frac{\tau M }{(\ell-\delta)^\alpha} + 2B_\psi+2B_\phi\Big\} \| v_1-v_2\|^\alpha
\end{align*}
and 
\begin{align*}
     \| Q_x (\mathcal T\sigma)_x(v_1) - Q_x (\mathcal T\sigma)_x(v_2)\|    
      \le (2\tau B_\phi + \tau M (L+\delta)^\alpha + 2\tau B_\psi)  \| v_1-v_2\|^\alpha;
\end{align*}
if $\| v_1-v_2\| <1$, then
\begin{align*}
       \| P_x (\mathcal T\sigma)_x(v_1) - P_x (\mathcal T\sigma)_x(v_2)\| 
       \le  \Big\{\frac{\tau M }{(\ell-\delta)^\alpha} +\frac{2\delta}{\ell-\delta} + \frac{\delta M}{(\ell-\delta)^\alpha}\Big\} \| v_1-v_2\|^\alpha
\end{align*}
and 
\begin{equation*}
     \| Q_x (\mathcal T\sigma)_x(v_1) - Q_x (\mathcal T\sigma)_x(v_2)\|    \le (\tau \delta + \tau M (L+\delta)^\alpha + \delta + \delta M ) \| v_1-v_2\|^\alpha.
\end{equation*}
It follows that 
\[\|(\mathcal T\sigma)_x(v_1) - (\mathcal T\sigma)_x(v_2)\|  \le K \| v_1-v_2\|^\alpha, \]
where 
\begin{align*}
    K&:=\max \Big\{ \frac{\tau M }{(\ell-\delta)^\alpha} + 2B_\psi+2B_\phi, \quad 2\tau B_\phi + \tau M (L+\delta)^\alpha + 2\tau B_\psi,
    \\
    & \qquad\frac{\tau M }{(\ell-\delta)^\alpha} +\frac{2\delta}{\ell-\delta} + \frac{\delta M}{(\ell-\delta)^\alpha}, \quad \tau \delta + \tau M (L+\delta)^\alpha + \delta + \delta M
    \Big\}.
\end{align*}
To ensure $K \le M$ and existence of $M$, we need $\alpha $ to satisfy 
\[ \alpha < \min \Big\{ \frac{\log(\tau+\delta)}{\log(\ell-\delta)}, \frac{\log \{(1-\delta)/\tau\}}{\log(L+\delta)} \Big\}. \]
Choose 
\begin{align*}
    M&\ge \max \Big\{ \frac{2(B_\psi+B_\phi)}{1-\tau/(\ell-\delta)^\alpha}, \quad \frac{2\tau (B_\phi+B_\psi)}{1- \tau (L+\delta)^\alpha},
    \\
    & \qquad  \frac{2\delta}{(\ell-\delta )(1- (\tau+\delta)/(\ell-\delta)^\alpha)}, \quad \frac{(\tau+1)\delta}{1-\tau(L+\delta)^\alpha - \delta} \Big\}.
\end{align*}
Then $K\le M$. This shows that $\mathcal T$ maps $\mathcal C^\alpha_b(E)$ into itself. As a consequence, the unique solution in $\mathcal C_b(E)$ is actually contained in $\mathcal C^\alpha_b(E)$.

Now, we consider the inverse of $\mathcal H$. Switch places of $\phi_x$ and $\psi_x$ in equation \eqref{pertu.linea}. Using the above procedure, we can have that equation 
   \begin{equation}\label{swith}
 {\mathcal H}_{f(x)} \circ (A_x+\psi_x) =( A_x +\phi_x) \circ {\mathcal H}_x \qquad \text{for all} \quad x\in X,  
  \end{equation}
also has a continuous solution $\tilde{\mathcal{H}}$ such that $\tilde{\mathcal{H}}_x : E_x \to E_x$ is also of the form $\Id + \tilde h_x$, where $\tilde h \in \mathcal C^\alpha_b(E)$ and is unique in $\mathcal C_b(E)$. We claim that $\tilde{\mathcal H}$ is exactly the inverse of $\mathcal H$. In fact, let us consider equation 
   \begin{equation}\label{triv.-equa.}
 {\mathcal H}_{f(x)} \circ (A_x+\phi_x) =( A_x +\phi_x) \circ {\mathcal H}_x \qquad \text{for all} \quad x\in X.
  \end{equation}
Obviously, it has a trivial solution $\Id: E\to E$ but has another solution $\tilde{\mathcal H} \circ \mathcal H$ because equations \eqref{pertu.linea} and \eqref{swith} yield 
\begin{align*}
\tilde{\mathcal H}_{f(x)} \circ \mathcal H_{f(x)} \circ (A_x+\phi_x)=   \tilde{\mathcal H}_{f(x)} \circ (A_x+\psi_x) \circ \mathcal H_x = (A_x+\phi_x) \circ \tilde{\mathcal H}_{x} \circ \mathcal H_{x}.
\end{align*}
It is easy to see that
\[\tilde{\mathcal H}_{x} \circ \mathcal H_{x}= (\Id + \tilde h_{x}) \circ (\Id + h_{x}) = \Id + h_x + \tilde h_x\circ (\Id+h_x)\]
and that
\[\sup_{x\in X} \sup_{v\in E_x} \|h_x(v)+\tilde h_x(v+h_x(v)\| <\infty. \] 
This says that $h_x + \tilde h_x\circ (\Id+h_x) \in \mathcal C_b(E)$. By the uniqueness of solution of equation \eqref{triv.-equa.} in the space $\mathcal C_b(E)$, we conclude that $h_x + \tilde h_x\circ (\Id+h_x) $ must be zero. In other words, 
\begin{equation}\label{invers.1}
    \tilde{\mathcal H}_{x} \circ \mathcal H_{x} = \Id.
\end{equation}
Take into account equation 
   \begin{equation*}
 {\mathcal H}_{f(x)} \circ (A_x+\psi_x) =( A_x +\psi_x) \circ {\mathcal H}_x \qquad \text{for all} \quad x\in X.
  \end{equation*}
Using similar arguments, we derive that 
\begin{equation}\label{invers.2}
\mathcal H_{x} \circ       \tilde{\mathcal H}_{x} = \Id.
\end{equation}
Thus, equalities \eqref{invers.1} and \eqref{invers.2} imply the claim. This proves Lemma \ref{lm-analog-push}.
\end{proof}

As a special case, Theorem \ref{thm1} is immediately obtained by Lemma \ref{lm-analog-push}.

The following lemma is helpful to the proof of Theorem \ref{thm2}.
\begin{lm}\label{lm-Holder-along.base}
    Under the assumptions of Theorem \ref{thm2}, we also assume that $\psi_x \in \mathcal{C}^{0,1}_b(E;\delta)$,
    \begin{equation}
    \|\hat\Phi_{f(x)} \circ \psi_x \circ \Phi_x^{-1}(\tilde v) - \hat \Phi_{f(\tilde x)} \circ \psi_{\tilde x} \circ \Phi_{\tilde x}^{-1}(\tilde v) \|\le M_\phi d(x,\tilde x)^\beta\| \tilde v\|,   \end{equation}
and $B_\psi$ is sufficiently small.
    Then  the functions $h$ and $\tilde h$ in Lemma \ref{lm-analog-push} both are H\"older continuous along base space $X$ with some exponent $0<\tilde \alpha \le \alpha\beta$.
\end{lm}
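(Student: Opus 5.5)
Within the proof of Lemma \ref{lm-analog-push}, $h$ is the unique fixed point of the contraction $\mathcal T$, and it was shown to lie in the closed invariant subset $\mathcal C^\alpha_b(E)$. I would repeat this scheme once more. I would introduce a smaller closed subset $\mathcal C^{\alpha,\tilde\alpha}_b(E)\subset\mathcal C^\alpha_b(E)$, consisting of those $\sigma$ which additionally satisfy, in the local trivializations of Remark \ref{rmk1},
\[
\| \Phi_x \circ \sigma_x \circ \Phi_x^{-1}(\tilde v) - \Phi_{\tilde x} \circ \sigma_{\tilde x} \circ \Phi_{\tilde x}^{-1}(\tilde v)\| \le \tilde M d(x,\tilde x)^{\tilde \alpha}\| \tilde v\|^\alpha .
\]
This set is closed under uniform convergence, since the inequality is pointwise. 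So it suffices to show that $\mathcal T$ maps it into itself for suitable $\tilde M$ and $\tilde\alpha\le\alpha\beta$. The contraction property then forces the fixed point $h$ to lie in it. The same argument, with $\phi$ and $\psi$ interchanged, handles $\tilde h$, which is the fixed point of the switched equation \eqref{swith}.

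To verify invariance, I would fix $x,\tilde x$ in a common small chart $U$ (with $f(x),f(\tilde x)$ in $V$ and $f^{-1}(x),f^{-1}(\tilde x)$ in a third chart), write every term of \eqref{operator} in coordinates, and estimate the difference between the $x$-version and the $\tilde x$-version term by term using the triangle inequality. There are three kinds of terms.

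The first kind consists of terms such as $(A^u_x)^{-1}\circ Q_x\circ\sigma_{f(x)}\circ(A_x+\phi_x)$. When $x$ is replaced by $\tilde x$ while $A,\phi$ are frozen, the change in $\sigma$ costs $\tau\tilde M d(f(x),f(\tilde x))^{\tilde\alpha}\|(A_x+\phi_x)\tilde v\|^\alpha\le \tau \tilde M\,\mathrm{Lip}(f)^{\tilde\alpha}(L+\delta)^\alpha d^{\tilde\alpha}\|\tilde v\|^\alpha$. The change in the argument costs $M\|(A_x+\phi_x)\tilde v-(A_{\tilde x}+\phi_{\tilde x})\tilde v\|^\alpha\le M(M_A+M_\phi)^\alpha d^{\alpha\beta}\|\tilde v\|^\alpha$; here I use \eqref{Holder-A}, \eqref{Holder-phi} and the $\alpha$-H\"older property of $\sigma$ from $\mathcal C^\alpha_b(E)$. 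The stable half is analogous. It uses $f^{-1}$, which is Lipschitz on the compact space, and $(A+\phi)^{-1}$. For the inverse, I would bound the base-dependence of $(A_x+\phi_x)^{-1}$ by writing $u-\tilde u=(A+\phi)^{-1}_{x}[(A+\phi)_{\tilde x}\tilde u-(A+\phi)_x\tilde u]$, which has Lipschitz constant $(\ell-\delta)^{-1}$.

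The second kind consists of the base-variation of the operators $P_x$, $Q_x$ and $(A^u_x)^{-1}$. These are H\"older in $x$ with exponent $\beta$, as I would derive from \eqref{Holder-A} via the invariant-splitting characterization. This is the step I am least sure of, since the paper only assumes the splitting is continuous. My first attempt would be the standard argument that the stable projection is a fixed point of a contraction built from $A$, hence inherits H\"older regularity with some exponent $\le\beta$. This may force $\tilde\alpha$ smaller still, consistent with ``some exponent'' in the statement.

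The third kind consists of the $\phi,\psi$ terms with no $\sigma$, which are bounded via \eqref{Holder-phi} by a multiple of $d^\beta\|\tilde v\|$.

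The main obstacle is that each of these bounds has the form $d^{\beta}\|\tilde v\|$ or $d^{\alpha\beta}\|\tilde v\|^\alpha$, while the target is $d^{\tilde\alpha}\|\tilde v\|^\alpha$. I would split into $\|\tilde v\|$ small and $\|\tilde v\|$ large. For large $\|\tilde v\|$, I would instead use boundedness: each $\phi,\psi$ difference is $\le 2B_\phi$ or $2B_\psi$, so interpolating $\min\{2B_\phi, M_\phi d^\beta\|\tilde v\|\}\le (2B_\phi)^{1-\alpha}M_\phi^\alpha d^{\alpha\beta}\|\tilde v\|^\alpha$ converts everything to the form $d^{\alpha\beta}\|\tilde v\|^\alpha$. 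Since $d\le\operatorname{diam}X$ and $\tilde\alpha\le\alpha\beta$, we then get $d^{\alpha\beta}\le C d^{\tilde\alpha}$. Collecting terms gives
\[
\tilde K\le \theta\,\tilde M + c_1 M + c_2(B_\phi,B_\psi),
\qquad \theta:=\max\{\tau\,\mathrm{Lip}(f)^{\tilde\alpha}(L+\delta)^\alpha,\ (\tau+\delta)\,\mathrm{Lip}(f^{-1})^{\tilde\alpha}(\ell-\delta)^{-\alpha}\}+O(\delta).
\]
I would choose $\tilde\alpha$ and $\alpha$ small so that $\theta<1$; this is presumably the condition \eqref{less-one} cited in the remark. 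Smallness of $B_\phi,B_\psi$ is what keeps the $\sigma$-independent contributions and the $\delta$-weighted terms from spoiling $\theta<1$. Then choosing $\tilde M\ge (c_1M+c_2)/(1-\theta)$ gives the invariance, and the lemma follows.
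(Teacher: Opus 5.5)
Your proposal follows the paper's proof. You show that $\mathcal T$ maps the closed set $\mathcal C^{\alpha,\tilde\alpha}_b(E)$ into itself through term-by-term estimates of the stable and unstable parts, choose $\alpha,\tilde\alpha$ small so that \eqref{less-one} holds, and then take $\tilde M$ large. Two of your steps are real refinements of points the paper passes over: you treat the base variation of $P_x,Q_x,(A^u_x)^{-1}$ explicitly, whereas the paper silently folds the H\"older continuity of $A^s=P\circ A$ into \eqref{Holder-A}, and you use the interpolation $\min\{2B_\phi, M_\phi d^\beta\|\tilde v\|\}\le (2B_\phi)^{1-\alpha}M_\phi^\alpha d^{\alpha\beta}\|\tilde v\|^\alpha$ for large $\|\tilde v\|$.

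There are two corrections, both of which the paper also needs tacitly. First, compactness does not make $f^{-1}$ Lipschitz: $t\mapsto t^3$ on $[-1,1]$ is a Lipschitz homeomorphism whose inverse is not Lipschitz. So you must assume $f$ is bi-Lipschitz, as the paper does implicitly through its ``lower Lipschitz constant'' $\ell_f$. Second, your bounds $\|(A_x+\phi_x)\tilde v\|\le (L+\delta)\|\tilde v\|$ and $\|\sigma_x(u)\|\le M\|u\|^\alpha$ require $\phi_x(0)=\psi_x(0)=0$, and hence $\sigma_x(0)=0$. Without this normalization the target inequality can already fail at $\tilde v=0$.
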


\begin{proof}
The idea is to show that $\mathcal{T}$ is a self-mapping on a closed subset of $\mathcal C_b^\alpha(E)$, which consists of maps $\sigma: E\to E$ both H\"older continuous along fiber direction and base direction.

Consider a closed subset of $\mathcal C_b^\alpha(E)$,
\begin{align*}
 \mathcal C_b^{\alpha,\tilde \alpha}(E) &:= \{\sigma \in \mathcal C_b^\alpha(E): \text{ for each $x\in X$ there exists a neighborhood $U$ of $x$ } 
 \\
 & \qquad \text{ such that for each $\tilde v\in\mathbb R^d$ and each $\tilde x \in U$,}
 \\
 &\qquad
\| \Phi\circ\sigma\circ \Phi^{-1}(x,\tilde v) - \Phi\circ\sigma\circ \Phi^{-1}(\tilde x,\tilde v) \| \le \tilde M d(x,\tilde x)^{\tilde \alpha} \| \tilde v\|^\alpha \},
\end{align*}
where $\Phi: \pi^{-1} (U) \to U\times \mathbb R^d$ is a local trivialization for $E$, $\| \cdot\|$ stands for the Euclidean norm on $\mathbb R^d$, $\tilde M>0$ and $0<\tilde \alpha <1$, independent of $x,\tilde x$ and $\tilde v$, are two constants to be determined, and $d$ denotes the distance on $X$. Next, we show that $\mathcal T$ maps $\mathcal C_b^{\alpha,\tilde \alpha}(E) $ into itself for some $\tilde M>0$ and $0<\tilde \alpha<1$. Indeed, letting 
\[ B_{f^{-1}(x)}:= \{A_{f^{-1}(x)}^s \circ \sigma_{f^{-1}(x)} +\psi^s_{f^{-1}(x)}\circ (\Id+\sigma_{f^{-1}(x)})  - \phi^s_{f^{-1}(x)} \}\circ (A_{f^{-1}(x)}+\phi_{f^{-1}(x)})^{-1}  \]
for $x\in X$, from \eqref{operator}
we see that 
\begin{align*}
  &\| \Phi \circ P_x\circ(\mathcal T \sigma) \circ \Phi^{-1} (x,\tilde v) - \Phi \circ P_{\tilde x}\circ(\mathcal T \sigma) \circ \Phi^{-1} (\tilde x,\tilde v) \| 
  \\
  &= \| \Phi_x \circ B_{f^{-1}(x)}  \circ \Phi_x^{-1}(\tilde v) -  \Phi_{\tilde x} \circ B_{f^{-1}(\tilde x)}  \circ \Phi_{\tilde x}^{-1}(\tilde v) \|
  \\
  &\le  D_1 + D_2 +D_3,
\end{align*}
where 
\begin{align*}
    D_1&:= \| \Phi_x\circ A_{f^{-1}(x)}^s \circ \sigma_{f^{-1}(x)} \circ  (A_{f^{-1}(x)}+\phi_{f^{-1}(x)})^{-1} \circ \Phi_x^{-1}(\tilde v) 
\\
& \qquad-   \Phi_{\tilde x}\circ A_{f^{-1}(\tilde x)}^s \circ \sigma_{f^{-1}(\tilde x)} \circ(A_{f^{-1}(\tilde x)}+\phi_{f^{-1}(\tilde x)})^{-1} \circ \Phi_{\tilde x}^{-1}(\tilde v) \|,
\\
D_2 &:= \| \Phi_x \circ \psi^s_{f^{-1}(x)} \circ (\Id + \sigma_{f^{-1}(x)}) \circ (A_{f^{-1}(x)}+\phi_{f^{-1}(x)})^{-1} \circ \Phi_x^{-1}(\tilde v) 
    \\
    & \qquad -  \Phi_{\tilde x} \circ \psi^s_{f^{-1}(\tilde x)} \circ (\Id + \sigma_{f^{-1}(\tilde x)}) \circ (A_{f^{-1}(\tilde x)}+\phi_{f^{-1}(\tilde  x)})^{-1} \circ \Phi_{\tilde x}^{-1}(\tilde v) \|,
\\
D_3&:= \| \Phi_x \circ \phi^s_{f^{-1}(x)}  \circ (A_{f^{-1}(x)}+\phi_{f^{-1}(x)})^{-1} \circ \Phi_x^{-1}(\tilde v) 
    \\
    & \qquad -  \Phi_{\tilde x} \circ \phi^s_{f^{-1}(\tilde x)}  \circ (A_{f^{-1}(\tilde x)}+\phi_{f^{-1}(\tilde  x)})^{-1} \circ \Phi_{\tilde x}^{-1}(\tilde v) \|.
\end{align*}
Now we estimate $D_1$, $D_2$ and $D_3$ separately. Here we only present details of the first estimate since others are similar.
By \eqref{Holder-A}, \eqref{Holder-phi},  remark \ref{rmk1}  and $f:X\to X$ being a Lipschitz homeomorphism,  we are able to calculate that
\begin{align*}
&D_1:=\| \Phi_x\circ A_{f^{-1}(x)}^s \circ \sigma_{f^{-1}(x)} \circ  (A_{f^{-1}(x)}+\phi_{f^{-1}(x)})^{-1} \circ \Phi_x^{-1}(\tilde v) 
\\
& \qquad-   \Phi_{\tilde x}\circ A_{f^{-1}(\tilde x)}^s \circ \sigma_{f^{-1}(\tilde x)} \circ(A_{f^{-1}(\tilde x)}+\phi_{f^{-1}(\tilde x)})^{-1} \circ \Phi_{\tilde x}^{-1}(\tilde v) \| 
\\
&\le \| \Phi_x\circ A_{f^{-1}(x)}^s \circ \hat \Phi^{-1}_{f^{-1}( x)}\circ \hat\Phi_{f^{-1}(x)} \circ \sigma_{f^{-1}(x)} \circ  (A_{f^{-1}(x)}+\phi_{f^{-1}(x)})^{-1} \circ \Phi_x^{-1}(\tilde v) 
\\
& \qquad - \Phi_{\tilde x}\circ A_{f^{-1}(\tilde x)}^s \circ \hat \Phi^{-1}_{f^{-1}(\tilde x)}\circ \hat \Phi_{f^{-1}(x)}\circ\sigma_{f^{-1}(x)} \circ  (A_{f^{-1}(x)}+\phi_{f^{-1}(x)})^{-1} \circ \Phi_x^{-1}(\tilde v) \|
\\
& \qquad + \|\Phi_{\tilde x}\circ A_{f^{-1}(\tilde x)}^s \circ \hat \Phi^{-1}_{f^{-1}(\tilde x)}\circ\hat \Phi_{f^{-1}(x)}\circ\sigma_{f^{-1}(x)} \circ  (A_{f^{-1}(x)}+\phi_{f^{-1}(x)})^{-1} \circ \Phi_x^{-1}(\tilde v)
\\
& \qquad - \Phi_{\tilde x}\circ A_{f^{-1}(\tilde x)}^s \circ\hat \Phi^{-1}_{f^{-1}(\tilde x)}\circ\hat \Phi_{f^{-1}(\tilde x)} \circ \sigma_{f^{-1}(\tilde x)} \circ(A_{f^{-1}(\tilde x)}+\phi_{f^{-1}(\tilde x)})^{-1} \circ \Phi_{\tilde x}^{-1}(\tilde v) \|
\\
& \le M_A\ell_f^\beta d(x,\tilde x)^\beta \| \hat \Phi_{f^{-1}(x)}\circ\sigma_{f^{-1}(x)} \circ  (A_{f^{-1}(x)}+\phi_{f^{-1}(x)})^{-1} \circ \Phi_x^{-1}(\tilde v)\| 
\\
& \qquad + \|\Phi_{\tilde x}\circ A_{f^{-1}(\tilde x)}^s \circ \hat\Phi^{-1}_{f^{-1}(\tilde x)}\| \cdot \|\hat \Phi_{f^{-1}(x)}\circ\sigma_{f^{-1}(x)} \circ  (A_{f^{-1}(x)}+\phi_{f^{-1}(x)})^{-1} \circ \Phi_x^{-1}(\tilde v)
\\
& \qquad - \hat\Phi_{f^{-1}(\tilde x)} \circ \sigma_{f^{-1}(\tilde x)} \circ(A_{f^{-1}(\tilde x)}+\phi_{f^{-1}(\tilde x)})^{-1} \circ \Phi_{\tilde x}^{-1}(\tilde v)\|
\\
& \le \frac{M_A \ell_f^\beta M}{(\ell-\delta)^\alpha}  d(x,\tilde x)^\beta \| \tilde v\|^\alpha
\\
& \qquad + \tau \| \hat\Phi_{f^{-1}(x)}\circ\sigma_{f^{-1}(x)} \circ \hat\Phi^{-1}_{f^{-1}(x)} \circ  \hat\Phi_{f^{-1}(x)} \circ (A_{f^{-1}(x)}+\phi_{f^{-1}(x)})^{-1} \circ \Phi_x^{-1}(\tilde v)
\\
& \qquad - \hat\Phi_{f^{-1}(\tilde x)}\circ\sigma_{f^{-1}(\tilde x)} \circ \hat\Phi^{-1}_{f^{-1}(\tilde x)} \circ  \hat\Phi_{f^{-1}(x)} \circ (A_{f^{-1}(x)}+\phi_{f^{-1}(x)})^{-1} \circ \Phi_x^{-1}(\tilde v) \|
\\
& \qquad+ \tau \| \hat\Phi_{f^{-1}(\tilde x)}\circ\sigma_{f^{-1}(\tilde x)} \circ \hat\Phi^{-1}_{f^{-1}(\tilde x)} \circ  \hat\Phi_{f^{-1}(x)} \circ (A_{f^{-1}(x)}+\phi_{f^{-1}(x)})^{-1} \circ \Phi_x^{-1}(\tilde v)
\\
& \qquad - \hat\Phi_{f^{-1}(\tilde x)} \circ \sigma_{f^{-1}(\tilde x)} \circ \hat\Phi^{-1}_{f^{-1}(\tilde x)} \circ  \hat\Phi_{f^{-1}(\tilde x)}\circ(A_{f^{-1}(\tilde x)}+\phi_{f^{-1}(\tilde x)})^{-1} \circ \Phi_{\tilde x}^{-1}(\tilde v)\|
\\
& \le \frac{M_A \ell_f^\beta M}{(\ell-\delta)^\alpha}  d(x,\tilde x)^\beta \| \tilde v\|^\alpha 
\\
&\qquad+\tau \tilde M d(f^{-1}(x),f^{-1}(\tilde x) )^{\tilde \alpha} \| \hat\Phi_{f^{-1}(x)} \circ (A_{f^{-1}(x)}+\phi_{f^{-1}(x)})^{-1} \circ \Phi_x^{-1}(\tilde v)\|^\alpha
\\
& \qquad + \tau M  \| \hat\Phi_{f^{-1}(x)} \circ (A_{f^{-1}(x)}+\phi_{f^{-1}(x)})^{-1} \circ \Phi_x^{-1}(\tilde v)
\\
& \qquad - \hat\Phi_{f^{-1}(\tilde x)}\circ(A_{f^{-1}(\tilde x)}+\phi_{f^{-1}(\tilde x)})^{-1} \circ \Phi_{\tilde x}^{-1}(\tilde v)\|^\alpha
\\
& \le \frac{M_A \ell_f^\beta M}{(\ell-\delta)^\alpha}  d(x,\tilde x)^\beta \| \tilde v\|^\alpha 
+ \frac{\tau\tilde M \ell_f^{\tilde \alpha}}{(\ell-\delta
 )^\alpha} 
d(x,\tilde x)^{\tilde \alpha} \| \tilde v\|^\alpha 
\\
 & \qquad  + (M_A^\alpha + M_\phi^\alpha)\frac{\tau C \ell_{f}^{\alpha\beta} M}{(\ell-\delta)^{2\alpha}} d(x,\tilde x)^{\alpha \beta } \| \tilde v\|^\alpha
 \\
 & \le \Bigg\{ \frac{M_A \ell_f^\beta M}{(\ell-\delta)^\alpha}  +  \frac{\tau\tilde M \ell_f^{\tilde \alpha}}{(\ell-\delta
 )^\alpha} 
+ (M_A^\alpha + M_\phi^\alpha)\frac{\tau C \ell_{f}^{\alpha\beta} M}{(\ell-\delta)^{2\alpha}} \Bigg\} d(x,\tilde x)^{\tilde \alpha} \| \tilde v\|^\alpha,
 \end{align*}
 where $\ell_f$ is the lower Lipschitz constant of $f$ and  $C$ is a constant sufficiently close to $1$, if $\tilde \alpha$ is chosen to satisfy $0<\tilde \alpha\le \alpha\beta$.
The last estimate in the above is given as follows:
\begin{align*}
&\| \tilde v-  \Phi_x \circ (A_{f^{-1(x)}}+\phi_{f^{-1}(x)}) \circ \hat\Phi^{-1}_{f^{-1}(x)} \circ \hat\Phi_{f^{-1}(\tilde x)}\circ(A_{f^{-1}(\tilde x)}+\phi_{f^{-1}(\tilde x)})^{-1} \circ \Phi_{\tilde x}^{-1}(\tilde v) \|^\alpha
\\
&=\| \Phi_x \circ (A_{f^{-1(x)}}+\phi_{f^{-1}(x)}) \circ \hat\Phi^{-1}_{f^{-1}(x)} \circ\hat\Phi_{f^{-1}(x)} \circ (A_{f^{-1}(x)}+\phi_{f^{-1}(x)})^{-1} \circ \Phi_x^{-1}(\tilde v)
\\
& \qquad - \Phi_x \circ (A_{f^{-1(x)}}+\phi_{f^{-1}(x)}) \circ \hat\Phi^{-1}_{f^{-1}(x)} \circ \hat \Phi_{f^{-1}(\tilde x)}\circ(A_{f^{-1}(\tilde x)}+\phi_{f^{-1}(\tilde x)})^{-1} \circ \Phi_{\tilde x}^{-1}(\tilde v)\|^\alpha    
\\
& \ge (\ell -\delta)^\alpha \|\hat\Phi_{f^{-1}(x)} \circ (A_{f^{-1}(x)}+\phi_{f^{-1}(x)})^{-1} \circ \Phi_x^{-1}(\tilde v)
\\
& \qquad - \hat \Phi_{f^{-1}(\tilde x)}\circ(A_{f^{-1}(\tilde x)}+\phi_{f^{-1}(\tilde x)})^{-1} \circ \Phi_{\tilde x}^{-1}(\tilde v)\|^\alpha.  
\end{align*}
On the other hand, letting $v:=\hat\Phi_{f^{-1}(\tilde x)}\circ(A_{f^{-1}(\tilde x)}+\phi_{f^{-1}(\tilde x)})^{-1} \circ \Phi_{\tilde x}^{-1}(\tilde v)$, the first line of the above estimate can be estimated as the following:
\begin{align*}
    &\text{The first line}
    \\
    & =\| \Phi_{\tilde x} \circ (A_{f^{-1}(\tilde x)}+\phi_{f^{-1}(\tilde x)}) \circ \hat\Phi_{f^{-1}(\tilde x)}^{-1}(v) - \Phi_x \circ (A_{f^{-1(x)}}+\phi_{f^{-1}(x)}) \circ \hat\Phi^{-1}_{f^{-1}(x)}(v)\|^\alpha
    \\
    & \le (M_A d(f^{-1}(x),f^{-1}(\tilde x))^\beta \| v\| )^\alpha+ (M_\phi d(f^{-1}(x),f^{-1}(\tilde x))^\beta \| v\| )^\alpha
    \\
    & \le (M_A^\alpha + M_\phi^\alpha)   \Big( \frac{ C\ell_{f}^\beta}{(\ell-\delta)} \Big)^\alpha  d(x,\tilde x)^{\alpha \beta } \| \tilde v\|^\alpha,
\end{align*}
where $C$ is a constant sufficiently close to $1$.
Then we get that 
\begin{align*}
& \|\hat\Phi_{f^{-1}(x)} \circ (A_{f^{-1}(x)}+\phi_{f^{-1}(x)})^{-1} \circ \Phi_x^{-1}(\tilde v)
 - \hat \Phi_{f^{-1}(\tilde x)}\circ(A_{f^{-1}(\tilde x)}+\phi_{f^{-1}(\tilde x)})^{-1} \circ \Phi_{\tilde x}^{-1}(\tilde v)\|^\alpha
 \\
& \le (M_A^\alpha + M_\phi^\alpha)\Big( \frac{C \ell_{f}^\beta}{(\ell-\delta)^2} \Big)^\alpha d(x,\tilde x)^{\alpha \beta } \| \tilde v\|^\alpha.
\end{align*}
For the estimate for $D_2$, we have that
\begin{align*}
    &D_2:=\| \Phi_x \circ \psi^s_{f^{-1}(x)} \circ (\Id + \sigma_{f^{-1}(x)}) \circ (A_{f^{-1}(x)}+\phi_{f^{-1}(x)})^{-1} \circ \Phi_x^{-1}(\tilde v) 
    \\
    & \qquad -  \Phi_{\tilde x} \circ \psi^s_{f^{-1}(\tilde x)} \circ (\Id + \sigma_{f^{-1}(\tilde x)}) \circ (A_{f^{-1}(\tilde x)}+\phi_{f^{-1}(\tilde  x)})^{-1} \circ \Phi_{\tilde x}^{-1}(\tilde v) \|
    \\
    & \le \| \Phi_x \circ \psi^s_{f^{-1}(x)} \circ \hat \Phi_{f^{-1}(x)}^{-1} \circ \hat \Phi_{f^{-1}(x)} \circ (\Id + \sigma_{f^{-1}(x)}) \circ (A_{f^{-1}(x)}+\phi_{f^{-1}(x)})^{-1} \circ \Phi_x^{-1}(\tilde v) 
    \\
    & \qquad - \Phi_{\tilde x} \circ \psi^s_{f^{-1}(\tilde x)} \circ \hat \Phi_{f^{-1}(\tilde x)}^{-1} \circ \hat \Phi_{f^{-1}(x)} \circ (\Id + \sigma_{f^{-1}(x)}) \circ (A_{f^{-1}(x)}+\phi_{f^{-1}(x)})^{-1} \circ \Phi_x^{-1}(\tilde v) \|
    \\
    & \qquad + \|\Phi_{\tilde x} \circ \psi^s_{f^{-1}(\tilde x)} \circ \hat \Phi_{f^{-1}(\tilde x)}^{-1} \circ \hat \Phi_{f^{-1}(x)} \circ (\Id + \sigma_{f^{-1}(x)}) \circ (A_{f^{-1}(x)}+\phi_{f^{-1}(x)})^{-1} \circ \Phi_x^{-1}(\tilde v) 
    \\
    & \qquad - \Phi_{\tilde x} \circ \psi^s_{f^{-1}(\tilde x)} \circ (\Id + \sigma_{f^{-1}(\tilde x)}) \circ (A_{f^{-1}(\tilde x)}+\phi_{f^{-1}(\tilde  x)})^{-1} \circ \Phi_{\tilde x}^{-1}(\tilde v) \|
    \\
   & \le \begin{cases}
        M_\psi \Big\{ \frac{1}{\ell-\delta} + \frac{M}{(\ell-\delta)^\alpha} \Big\} d(x,\tilde x)^\beta \| \tilde v\|^\alpha \qquad & \text{if } \|\tilde v\| <1,
        \\
        M_\psi^\alpha \Big\{ 
        \frac{1}{(\ell-\delta)^\alpha}  + \Big(\frac{M}{(\ell-\delta)^\alpha} \Big)^\alpha \Big\} d(x,\tilde x)^{\alpha\beta}  \| \tilde v\|^\alpha \qquad& \text{if } \|\tilde v\| \ge 1,
    \end{cases}
    \\
    & \qquad + \delta \|\hat \Phi_{f^{-1}(x)} \circ (\Id + \sigma_{f^{-1}(x)}) \circ (A_{f^{-1}(x)}+\phi_{f^{-1}(x)})^{-1} \circ \Phi_x^{-1}(\tilde v) 
    \\
    & \qquad -\hat\Phi_{f^{-1}(\tilde x)} \circ  (\Id + \sigma_{f^{-1}(\tilde x)}) \circ (A_{f^{-1}(\tilde x)}+\phi_{f^{-1}(\tilde  x)})^{-1} \circ \Phi_{\tilde x}^{-1}(\tilde v) \|
    \\
    & \le \tilde C d(x,\tilde x)^{\tilde \alpha} \| \tilde v\|^\alpha +  \delta(M_A^\alpha + M_\phi^\alpha)\Big( \frac{C \ell_{f}^\beta}{(\ell-\delta)^2} \Big)^\alpha d(x,\tilde x)^{\alpha \beta } \| \tilde v\|^\alpha
    \\
    & \qquad +\frac{\delta\tilde M \ell_f^{\tilde \alpha}}{(\ell-\delta
 )^\alpha} 
d(x,\tilde x)^{\tilde \alpha} \| \tilde v\|^\alpha 
 + \delta(M_A^\alpha + M_\phi^\alpha)\frac{ C \ell_{f}^{\alpha\beta} M}{(\ell-\delta)^{2\alpha}} d(x,\tilde x)^{\alpha \beta } \| \tilde v\|^\alpha,
\end{align*}
where 
\begin{align}
  \tilde C&:=\max \Bigg\{       M_\psi \Big\{ \frac{1}{\ell-\delta} + \frac{M}{(\ell-\delta)^\alpha} \Big\}, 
  M_\psi^\alpha \Big\{ 
        \frac{1}{(\ell-\delta)^\alpha}  + \Big(\frac{M}{(\ell-\delta)^\alpha} \Big)^\alpha \Big\} \Bigg\}.   
        \label{tilde-C}
\end{align}
Here we need $B_\phi$ defined by \eqref{bound-perturb.} to be small.
The third estimate for $D_3$ is similar. Thus, for each $\sigma\in \mathcal C_b^{\alpha,\tilde \alpha}(E) $, by \eqref{operator},  \eqref{Holder-A}, \eqref{Holder-phi},  remark \ref{rmk1} and $f:X\to X$ being a Lipschitz homeomorphism, we can calculate that 
\begin{align}
    &\| \Phi \circ P_x\circ(\mathcal T \sigma) \circ \Phi^{-1} (x,\tilde v) - \Phi \circ P_{\tilde x}\circ(\mathcal T \sigma) \circ \Phi^{-1} (\tilde x,\tilde v) \| \notag
    \\
    & \le  D_1 + D_2 +D_3 \notag
    \\
    & \le \Bigg\{ \frac{M_A \ell_f^\beta M}{(\ell-\delta)^\alpha}  +  \frac{\tau\tilde M \ell_f^{\tilde \alpha}}{(\ell-\delta
 )^\alpha} 
+ (M_A^\alpha + M_\phi^\alpha)\frac{\tau C \ell_{f}^{\alpha\beta} M}{(\ell-\delta)^{2\alpha}} \notag
\\
& \qquad + \tilde C +   \delta(M_A^\alpha + M_\phi^\alpha)\Big( \frac{C \ell_{f}^\beta}{(\ell-\delta)^2} \Big)^\alpha (1+M) +\frac{\delta\tilde M \ell_f^{\tilde \alpha}}{(\ell-\delta
 )^\alpha} \notag
\\
&\qquad +\hat C + \Big(\frac{\delta C^2 \ell_f^\beta(M_A+M_\phi)}{(\ell-\delta)^2} \Big)^\alpha \Bigg\} d(x,\tilde x)^{\tilde \alpha} \| \tilde v\|^\alpha \notag
\\
& =: \Big\{\tilde M (\tau+\delta) \frac{\ell_f^{\tilde \alpha}}{(\ell-\delta
 )^\alpha} + \bar C \Big\} d(x,\tilde x)^{\tilde \alpha} \| \tilde v\|^\alpha
 \label{estim.-base1}
\end{align}
where $\tilde C$ is given in \eqref{tilde-C}, $C$ is a constant close to $1$,
\[ \hat C := \max \Bigg\{ \frac{M_\phi \ell_f^\beta}{\ell-\delta}, \bigg( \frac{M_\phi \ell_f^\beta}{\ell-\delta}\bigg)^\alpha \Bigg\} \]
and 
\begin{align*}
    \bar C&:= \frac{M_A \ell_f^\beta M}{(\ell-\delta)^\alpha}  
+ (M_A^\alpha + M_\phi^\alpha)\frac{\tau C \ell_{f}^{\alpha\beta} M}{(\ell-\delta)^{2\alpha}} + \tilde C +   \delta(M_A^\alpha + M_\phi^\alpha)\Big( \frac{C \ell_{f}^\beta}{(\ell-\delta)^2} \Big)^\alpha (1+M) 
\\
& \qquad +\hat C + \Big(\frac{\delta C^2 \ell_f^\beta(M_A+M_\phi)}{(\ell-\delta)^2} \Big)^\alpha .
\end{align*}

Analogously, we can have 
\begin{align}
    &\| \Phi \circ Q_x\circ(\mathcal T \sigma) \circ \Phi^{-1} (x,\tilde v) - \Phi \circ Q_{\tilde x}\circ(\mathcal T \sigma) \circ \Phi^{-1} (\tilde x,\tilde v) \| \notag
    \\
    & \le M_AM d(x,\tilde x)^\beta \| \tilde v\|^\alpha  + \tau C M_\phi^\alpha d(x,\tilde x)^{\alpha\beta} \| \tilde v\|^\alpha + M_A M (L+\delta)^\alpha d(x,\tilde x)^{\beta} \| \tilde v\|^\alpha  \notag
    \\
    &  \qquad
    +\tau C \tilde M (L+\delta)^\alpha {(L_f)}^{\tilde \alpha} d(x,\tilde x)^{\tilde \alpha} \| \tilde v\|^\alpha
     + \tau C M \{(M_A)^\alpha + (M_\phi)^\alpha\} d(x,\tilde x)^{\alpha\beta} \| \tilde v\|^\alpha \notag
     \\
    &\qquad + M_A \max \{ \delta(1+M), \delta^\alpha(1+M^\alpha)\} d(x,\tilde x)^\beta \|\tilde v\|^\alpha  + C^2 \tau \delta \tilde M d(x,\tilde x)^{\tilde \alpha} \|\tilde v\|^\alpha \notag
    \\
    & \qquad + C\tau \max \{ M_\phi(1+M), M_\phi(1+M^\alpha)\} d(x,\tilde x)^{\tilde \alpha}\|\tilde v\|^\alpha
    \notag
    \\
    & =:\{  \tilde M \tau ( C(L+\delta)^\alpha(L_f)^{\tilde \alpha} + C^2 \delta)  + \stackrel{=}{C}   \} 
    d(x,\tilde x)^{\tilde \alpha} \| \tilde v\|^\alpha,
    \label{estim.-base2}
\end{align}
where 
\begin{align*}
  \stackrel{=}{C} &:= M_A M +    \tau C M_\phi^\alpha +  M_A M (L+\delta)^\alpha
  \\
  & \qquad +\tau C M \{(M_A)^\alpha + (M_\phi)^\alpha\} + M_A \max \{ \delta(1+M), \delta^\alpha(1+M^\alpha)\} 
  \\
  & \qquad + C\tau \max \{ M_\phi(1+M), M_\phi(1+M^\alpha)\}.
\end{align*}

Combining \eqref{estim.-base1} with \eqref{estim.-base2}, we obtain that 
\begin{align*}
&\| \Phi \circ (\mathcal T \sigma) \circ \Phi^{-1} (x,\tilde v) - \Phi \circ (\mathcal T \sigma) \circ \Phi^{-1} (\tilde x,\tilde v) \|
\\
&\le \max \{  \| \Phi \circ P_x\circ(\mathcal T \sigma) \circ \Phi^{-1} (x,\tilde v) - \Phi \circ P_{\tilde x}\circ(\mathcal T \sigma) \circ \Phi^{-1} (\tilde x,\tilde v) \|,
\\
&\qquad \| \Phi \circ Q_x\circ(\mathcal T \sigma) \circ \Phi^{-1} (x,\tilde v) - \Phi \circ Q_{\tilde x}\circ(\mathcal T \sigma) \circ \Phi^{-1} (\tilde x,\tilde v) \| \}
\\
&\le \max  \Bigg\{  \tilde M (\tau+\delta) \frac{\ell_f^{\tilde \alpha}}{(\ell-\delta
 )^\alpha} + \bar C ,
 \tilde M \tau ( C(L+\delta)^\alpha(L_f)^{\tilde \alpha} + C^2 \delta)  + \stackrel{=}{C} 
\Bigg\}
    d(x,\tilde x)^{\tilde \alpha} \| \tilde v\|^\alpha,
\end{align*}
from which we see that 
\begin{equation*}
    \| \Phi \circ (\mathcal T \sigma) \circ \Phi^{-1} (x,\tilde v) - \Phi \circ (\mathcal T \sigma) \circ \Phi^{-1} (\tilde x,\tilde v) \| \le \tilde M  d(x,\tilde x)^{\tilde \alpha} \| \tilde v\|^\alpha
\end{equation*}
if we take 
\begin{align*}
    \tilde M \ge \max \Bigg\{  \frac{\bar C}{1- (\tau+\delta) \frac{\ell_f^{\tilde \alpha}}{(\ell-\delta
 )^\alpha} }, \frac{\stackrel{=}{C}}{1-\tau ( C(L+\delta)^\alpha(L_f)^{\tilde \alpha} + C^2 \delta) }
 \Bigg\}
\end{align*}
because we can choose positive $\tilde \alpha$ and $\alpha$ such that 
\begin{equation}\label{less-one}
\max\Big\{ (\tau+\delta) \frac{\ell_f^{\tilde \alpha}}{(\ell-\delta
 )^\alpha}, \tau ( C(L+\delta)^\alpha(L_f)^{\tilde \alpha} + C^2 \delta) \Big\}   < 1
\end{equation}
due to the condition $\delta<1-\tau$.
Consequently, $\mathcal T$ is a self-mapping on the set $C_b^{\alpha,\tilde \alpha}(E)$ for some positive $\tilde \alpha$ and $\alpha$. In other words, the solution obtained in Theorem \ref{thm1} is a H\"older continuous along the base space  $X$ for some positive exponent $\tilde \alpha$. Similar arguments can be applied to the function $\tilde h$. The proof is completed.
\end{proof}

As a special case, Theorem \ref{thm2} is immediately obtained by Lemma \ref{lm-Holder-along.base}. 

\section{Proof of Theorem \ref{thm3}}

First, we need a cut-off function defined on $E$. Consider a finite cover $(U_i)_{i=1}^n$ of $X$, and continuous local trivializations 
$\Phi_i : \pi^{-1}(U_i) \to U_i \times \mathbb R^d$. Choose a smooth cut-off function $\rho:\mathbb R^d \to \mathbb R$ such that 
\begin{equation*}
 \rho(t) = \begin{cases}
     1 & \text{for } \|t\| \le r,
     \\
     0 &  \text{for } \|t\| \ge 2r,
     \\
     \in (0,1) & \text{for } r <\|t\| < 2r,
 \end{cases}   
\end{equation*}
where $r>0$ is a small constant. Now, we define a local cut-off function $\rho_i$ on $\pi^{-1}(U_i)$ for each $i$ to be
\[\rho_i(v) = \rho(\|\Phi_i|_x(v)\|) \qquad \text{for each }  p \in U_i \text{ and each }  v\in E_x. \]
Using partition of unity $(\eta_i)_{i=1}^n$ subordinate to $(U_i)_{i=1}^n$, we get a cut-off function on $E$ defined by
\begin{equation}\label{cut-off}
    \xi(v) = \sum_{i=1}^n \eta_i(x) \cdot \rho_i(v)
\end{equation}
where $x$ is the base point of $v$, $0\le \eta_i(x)\le  1$ for each $1\le i\le n$ and each $x\in X$, supp $\eta_i \subset U_i$ for each $1\le i\le n$ and 
\[ \sum_{i=1}^n \eta_i(x) = 1 \qquad \text{ for all } x\in X.\]

Write 
\[ F= A + \phi,\]
where $A_x:E_x \to E_{f(x)}$ is the derivative of $F_x$ at the origin of $E_x$ for each $x\in X$. Then $\phi$ is a $C^1$ map from $E$ into itself satisfying $\phi(O_x)= O_{f(x)}$ and $D\phi_x(O_x) =0$ for each $x\in X$, where $O_x$  stands for the origin of $E_x$.  
Let 
\[ \tilde F :=A+\tilde \phi:=A+\xi \cdot \phi.\]
Obviously, $\tilde F=F$ at a neighborhood of zero section of $E$. Now we check that $A$ and $\tilde \phi$ satisfy the conditions of Theorems 1 and 2. Note that $A$ is assumed to be hyperbolic.
For each $x\in X$, by mean value theorem we see that 
\[ \|\tilde \phi_x(v_1) - \tilde \phi_x(v_2)\| \le \| D\tilde \phi_x(\xi)\| \cdot \| v_1 -v_2\|.  \]
Since $D\tilde \phi_x(O_x)=0$, we can choose $r>0$ so small so that  $\| D\tilde \phi_x(\xi)\| \le \min\{ 1-\tau,\ell\}$ and hence 
\[ \|\tilde \phi_x(v_1) - \tilde \phi_x(v_2)\| \le \min\{ 1-\tau,\ell\}  \cdot \| v_1 -v_2\|.  \]
Moreover, we have 
\[ \sup_{x\in X} \sup_{v\in E_x} \| \tilde \phi_x (v)\| < \infty  \]
to be sufficiently small since $X$ is compact, $\tilde \phi =0$ outside a neighborhood of zero section and $\| \cdot\|$ is continuous by making $r$ sufficiently small. The conditions of Theorem \ref{thm1} are satisfied.

 For $x,\tilde x\in X$ to be close, 
\begin{align*}
    &\| \hat\Phi_{f(x)} \circ A_x \circ \Phi_x^{-1} - \hat\Phi_{f(\tilde x)} \circ A_{\tilde x} \circ \Phi_{\tilde x}^{-1}\| 
    \\
    & = \| \hat\Phi_{f(x)} \circ DF_x(O_x) \circ \Phi_x^{-1} - \hat\Phi_{f(\tilde x)} \circ DF_{\tilde x}(O_{\tilde x}) \circ \Phi_{\tilde x}^{-1}\| 
    \\
    & \le \| \frac{\partial \hat \Phi \circ F \circ \Phi^{-1}}{\partial x \partial {\tilde v}} (\theta, 0)\| \cdot \| \varphi(x) - \varphi(\tilde x)\|
    \\
    & \le \| \frac{\partial \hat \Phi \circ F \circ \Phi^{-1}}{\partial x \partial {\tilde v}} (\theta, 0)\| \cdot L_\varphi d(x,\tilde x)
    \\
    & =: M_A d(x,\tilde x) 
    \\
    & \le M_A d(x,\tilde x)^\beta,
\end{align*}
where $(U,\varphi)$ is a smooth chart of $X$ containing $x$ and $\tilde x$, $\theta$ is a point on the segment connecting $\varphi(x)$ to $\varphi(\tilde x)$, and $0<\beta<1$. It is easy to check that $M_A<\infty$ by compactness of $X$.
In addition, 
\begin{align*}
   & \|\hat\Phi_{f(x)} \circ \tilde \phi_x \circ \Phi_x^{-1}(\tilde v) - \hat \Phi_{f(\tilde x)} \circ \tilde \phi_{\tilde x} \circ \Phi_{\tilde x}^{-1}(\tilde v) \| 
   \\
   & \le \|\hat\Phi_{f(x)} \circ (\tilde F_x - A_x) \circ \Phi_x^{-1}(\tilde v) - \hat \Phi_{f(\tilde x)} \circ (\tilde F_{\tilde x} - A_{\tilde x}) \circ \Phi_{\tilde x}^{-1}(\tilde v) \| 
   \\
   & \le \| \frac{\partial \hat \Phi \circ \tilde F \circ \Phi^{-1} }{\partial x}(\theta_1, \tilde v)\| d(x,\tilde x) + M_A d(x,\tilde x)^\beta \| \tilde v\|
   \\
   & = \| \frac{\partial \hat \Phi \circ \tilde F \circ \Phi^{-1} }{\partial x}(\theta_1, \tilde v) -\frac{\partial \hat \Phi \circ \tilde F \circ \Phi^{-1} }{\partial x}(\theta_1, 0) \| d(x,\tilde x) + M_A d(x,\tilde x)^\beta \| \tilde v\|
   \\
   &\le \| \frac{\partial^2 \hat \Phi \circ \tilde F \circ \Phi^{-1} }{\partial x\partial \tilde v}(\theta_1, \theta_2) \| \| \tilde v\|  d(x,\tilde x) + M_A d(x,\tilde x)^\beta \| \tilde v\| 
\\   
& \le  M_{\tilde \phi} d(x,\tilde x)^\beta \| \tilde v\|, 
\end{align*}
where
\[ M_{\tilde \phi} :=  \sup_{x\in X, v\in E_x}\left\| \frac{\partial^2  \tilde F  }{\partial x\partial  v}(x, v) \right\| +M_A <\infty\]
because $\tilde F$ is zero outside of a neighborhood of zero section.  We can also choose $r>0$ sufficiently small such that $B_{\tilde \phi}$ defined by \eqref{bound-perturb.} is also sufficiently small. Thus, we can apply Theorem \ref{thm1} and \ref{thm2} to the function $\tilde F$. Since $\tilde F=F$ near the zero section, Theorem \ref{thm3} is proved.  

{\bf Acknowledgements:}
This work is supported by  NSF-CQ \#CSTB2023NSCQ-JQX0020, NSFC
\#12271070 and the Science and Technology Research Program of Chongqing Municipal Education Commission (Grant No. KJQN202300540).

\appendix






\end{document}